\newtheorem{prop}{Proposition}[section]
\newtheorem{thm}{Theorem}[section]
\newtheorem{rem}{Remark}
\newtheorem{corollary}{Corollary}[section]
\newtheorem{definition}{Definition}[section]
\newtheorem{ex}{Example}[section]
\newcommand{\cH}{\mathcal{H}}
\newcommand{\cR}{\mathcal{R}}
\newcommand{\cX}{\mathcal{X}}
\newcommand{\alg}{\mathcal{A}^{gen}}
\begin{document}

\title[A Calabi--Yau algebra with $E_6$ symmetry and the 
 Clebsch--Gordan series of $sl(3)$]{A Calabi--Yau algebra with $E_6$ symmetry\\ and\\ the 
 Clebsch--Gordan series of $sl(3)$}

\author[N.Cramp\'e]{Nicolas Cramp\'e$^{\dagger}$}
\address{$^\dagger$ Institut Denis-Poisson CNRS/UMR 7013 - Universit\'e de Tours - Universit\'e d'Orl\'eans, 
Parc de Grandmont, 37200 Tours, France.}
\email{crampe1977@gmail.com }

\author[L.Poulain d'Andecy]{Lo\"ic Poulain d'Andecy$^{\ddagger}$}
\address{$^\ddagger$ Laboratoire de math\'ematiques de Reims UMR 9008, Universit\'e de Reims Champagne-Ardenne,
Moulin de la Housse BP 1039, 51100 Reims, France. }
\email{loic.poulain-dandecy@univ-reims.fr}

\author[L.Vinet]{Luc Vinet$^{*}$}
\address{$^*$ Centre de recherches math\'ematiques, Universit\'e de Montr\'eal,
P.O. Box 6128, Centre-ville Station,
Montr\'eal (Qu\'ebec), H3C 3J7, Canada.}
\email{vinet@CRM.UMontreal.ca}

\begin{abstract}
Building on classical invariant theory, it is observed that the polarised traces generate the centraliser $Z_L(sl(N))$ of the diagonal embedding of $U(sl(N))$ in $U(sl(N))^{\otimes L}$. 
The paper then focuses on $sl(3)$ and the case $L=2$. A Calabi--Yau algebra $\mathcal{A}$ with three generators is introduced and explicitly shown to possess a PBW basis and a certain central element. 
It is seen that $Z_2(sl(3))$ is isomorphic to a quotient of the algebra $\mathcal{A}$ by a single explicit relation fixing the value of the central element. 
Upon concentrating on three highest weight representations occurring in the Clebsch--Gordan series of $U(sl(3))$, a specialisation of $\mathcal{A}$ arises, involving the pairs of numbers 
characterising the three highest weights. 
In this realisation in $U(sl(3))\otimes U(sl(3))$, the coefficients in the defining relations and the value of the central element have degrees that correspond 
to the fundamental degrees of the Weyl group of type $E_6$. With the correct association between the six parameters of the representations and some roots of $E_6$, 
the symmetry under the full Weyl group of type $E_6$ is made manifest. 
The coefficients of the relations and the value of the central element in the realisation in $U(sl(3))\otimes U(sl(3))$ are expressed in terms of the fundamental invariant polynomials associated to $E_6$. 
It is also shown that the relations of the algebra $\mathcal{A}$ can be realised with Heun type operators in the Racah or Hahn algebra.
\end{abstract}

\maketitle

\vspace{3mm}




\setcounter{tocdepth}{1}
\tableofcontents

\section{Introduction}

In a nutshell, this paper introduces a Calabi--Yau algebra $\mathcal{A}^{gen}$ (according to the definition used in \cite{EG,G}) with striking features. 
The algebras which are going to play central roles are denoted:
\[\mathcal{A}^{gen}\ \rightsquigarrow\ \mathcal{A}\ \rightsquigarrow\ \mathcal{A}^{spec}\ .\]
The algebra $\mathcal{A}$ is the particular case, of special interest for our studies, of the generic Calabi--Yau algebra $\mathcal{A}^{gen}$ and the algebra $\mathcal{A}^{spec}$ is a specialisation of $\mathcal{A}$ corresponding to the choice of three highest weights of $sl(3)$.

Consider the centraliser of the diagonal embedding of $U(sl(3))$ in $U(sl(3))^{\otimes 2}$ which is non-Abelian and accounts for the degeneracies in the Clebsch--Gordan series. These degeneracies (\emph{i.e.} multiplicities) are the Littlewood--Richardson coefficients of $SL(3)$. We shall see that $\mathcal{A}$ surjects to this centraliser and shall observe moreover that the central parameters and the relation defining the kernel are mapped to elements whose degrees coincide with the fundamental degrees of the $E_6$ Weyl group. 
Furthermore, we shall see that this numerical observation is in fact the shadow of an actual $E_6$ symmetry occurring in the study of the direct sum decomposition of 
tensor products of two $sl(3)$-representations. Indeed, when the construction is restricted to triplets of highest weights representations of $sl(3)$ that are related in the Clebsch--Gordan series, 
the specialisation of $\mathcal{A}$ is found to be invariant under an action of the Weyl group of type $E_6$ on the parameters defining the highest weights. 
This action is described as the usual reflection representation once the parameters are correctly identified with some roots of $E_6$, and the specialised algebra can be entirely defined in terms of 
fundamental invariant polynomials of type $E_6$. Now before we get into the details of this, let us offer some background as a way of introduction.

A practical question in the decomposition of the tensor product of Lie or quantum algebra representations in irreducible components is the labelling of basis states; this is typically done by diagonalising a complete set of commuting operators that includes the total Casimir elements. Through the choice of a maximal Abelian subalgebra, the centraliser of the diagonal embedding in the tensor product of the universal algebra is the natural provider of labelling operators.

Interesting quadratic algebras have thus been identified by looking at the Racah problems, i.e. the case of three-fold products, for $U_q(sl(2))$ and $sl(2)$. 
Loosely speaking the centralisers of the diagonal embeddings in those cases are known as the Askey--Wilson \cite{GZh} and Racah \cite{GVZ} algebras (see also \cite{CPV}) which were originally 
introduced to encode the bispectral properties of the hypergeometric polynomials whose names they bear. 
The latter connection has led to the definition of Heun--Askey--Wilson \cite{BTVZ} and Heun--Racah \cite{BCTVZ} algebras which have their two generators realised by one of the bispectral operators and the other by the ``Heun'' operator given as the most general bilinear expression in the two bispectral operators \cite{GVZh}.

With an eye to generalisations, here we consider $sl(3)$ instead of the rank one algebras looked at so far. We examine the Clebsch--Gordan problem, that is we study the centraliser of the diagonal embedding of $U(sl(3))$ in two copies of this algebra. For $U_q(sl(2))$ and $sl(2)$, this problem is trivial; for $sl(3)$ however, it is not since the Clebsch--Gordan series exhibits degeneracies. This is one instance of 
``missing label problems'' where the centraliser must be called upon to complete the set of commuting operators. 
In the present case, the extra label will distinguish between identical modules occurring in the Clebsch--Gordan series. 
Our undertaking has led to the findings mentioned in the incipit. Of note is the identification of the cubic algebra $\mathcal{A}$ - a particular case of our master algebra $\mathcal{A}^{gen}$ - which maps surjectively to the centraliser of interest. Remarkably, this same algebra albeit with different central terms, has been identified by Lehrer and Racah \cite{racah} quite some time ago in the context of another well known ``missing label problem" involving $sl(3)$ or more precisely $su(3)$ in this case. This other ``missing label problem" (see for instance \cite{JMPW}) has to do with providing a complete labelling for the states that transform under an irreducible representation of $su(3)$ and 
are eigenvectors of the Casimir elements associated to the chain of subalgebras $su(3) \supset o(3) \supset o(2)$. 
A simple count shows that there is indeed one operator missing which has to be taken from the centraliser of $o(3)$ in $U(su(3))$. 
This is probably indicative of the larger relevance of $\mathcal{A}^{gen}$.

In general, the study of the diagonal centraliser of $U(\mathfrak{g})$, where $\mathfrak{g}$ is a semisimple or reductive Lie algebra, 
into an $L$-fold tensor product of $U(\mathfrak{g})$ can be put in perspective with the usual description of the centre of $U(\mathfrak{g})$. 
Indeed, the centre corresponds to the situation $L=1$, and it is described, through the Harish--Chandra isomorphism, as a (commutative) polynomial algebra with 
generators associated to the fundamental invariants of the Weyl group of $\mathfrak{g}$. Except if $\mathfrak{g}=sl(2)$ and $L=2$, the diagonal centraliser will not be commutative, 
due to the degeneracies appearing in the Clebsch--Gordan series. 

In the situation studied in detail in this paper ($\mathfrak{g}=sl(3)$ and $L=2$) but also in the situation $\mathfrak{g}=sl(2)$ and $L=3$, an interesting picture is starting to emerge.
In both cases, a Calabi--Yau algebra naturally appears instead of a commutative polynomial algebra (note that a polynomial algebra is in a sense the first example of a Calabi--Yau algebra). 
Another additional difficulty compared to the centre is that the commutative associated graded algebra of the diagonal centraliser is not generated by algebraically independent elements. 
This accounts for the need to take a quotient of the Calabi--Yau algebras in order to reach a complete description.
Again in both cases, the same phenomenon appears and leads to a rather natural quotient. 
Namely the quotient simply amounts to fix the value of the canonical central element of the Calabi--Yau algebra. 

We note that, as for the centre, the final description of the diagonal centraliser involves the fundamental invariants of a certain reflection groups, 
whose relevance to the problem was, 
at least for the authors, not easy to predict beforehand. For the situation considered here ($\mathfrak{g}=sl(3)$ and $L=2$), 
the type $E_6$ appears through a certain embedding of the root system of type $A_2\times A_2\times A_2$ into the root system of type $E_6$. 
Once this is done, it turns out quite surprisingly that the natural symmetry (under the Weyl group $W(A_2)\times W(A_2)\times W(A_2)$) actually extends to a symmetry under the full Weyl group of type $E_6$.
For the case $\mathfrak{g}=sl(2)$ and $L=3$, a similar symmetry also appears. Indeed, the Weyl group $W(D_4)$ appears through a certain embedding of the root system of type $A_1\times A_1\times A_1 \times A_1$ into 
the root system of type $D_4$ \cite{Zhedanov}. In both cases, the inclusion of root systems is natural in the affine extension of $E_6$ (respectively, $D_4$), which is a star-shaped affine Dynkin diagram. 

These star-shape affine Dynkin diagrams make their appearance in \cite{ELOR}, where is studied the quantum Hamiltonian reduction of tensor products of (certain quotients of) $U(sl_N)$ for various $N$. It is shown that these Hamiltonian reductions are isomorphic to spherical subalgebras of symplectic reflection algebras of certain types. The particular cases corresponding to the centralisers discussed in the previous paragraph are, respectively, $4$ copies of $U(sl_2)$ and $3$ copies of $U(sl_3)$, and for these cases, the corresponding symplectic reflection algebras are found to be of type, respectively, $D_4$ and $E_6$ (and of rank 1). Thus the algebras studied in the present paper can be seen as a presentation of the spherical subalgebra associated to the centraliser in \cite{ELOR} in the particular case of $U(sl_3)^{\otimes 3}$ making apparent the $E_6$ combinatorics.

The paper will unfold as follows. The Calabi--Yau algebra $\mathcal{A}^{gen}$ with three generators around which the article revolves is defined in Section 2. It is seen to be filtered with an appropriate degree assignment to the generators and shown explicitly to
have a PBW basis. The Calabi--Yau potential from which it derives is given together with the Casimir element. 
The polarised traces in $U(sl(N))^{\otimes{L}}$ are introduced in Section 3 where it is explained that their algebra denoted by $Z_L(sl(N))$ is the centraliser 
of the diagonal embedding of $U(sl(N))$ in its $L$-fold tensor product. 
In Section 4 we restrict to two copies of $sl(3)$ and use natural symmetry conditions to choose the generators of $Z_2(sl(3))$, 
the diagonal centraliser of $U(sl(3))$. Furthermore, a particular case $\mathcal{A}$ of $\mathcal{A}^{gen}$ is specified and shown to be realised in $U(sl(3))\otimes U(sl(3))$ 
via a surjective degree preserving morphism to $Z_2(sl(3))$. The kernel of this map is then identified so as to completely characterise the diagonal centraliser $Z_2(sl(3))$ as the 
quotient of $\mathcal{A}$ by an additional relation. It will have been observed that the central parameters of $\mathcal{A}$ and the additional relation have degrees that correspond to the 
fundamental degrees of the Weyl group of type $E_6$. The specialisation of the centraliser $Z_2(sl(3))$ that follows from picking three weights for $sl(3)$ labelled by the pairs of numbers $(m_1,m_2), (m'_1,m'_2), (m''_1,m''_2)$ is considered in Section 5. 
The effect is that the central parameters are replaced by expressions involving these numbers. An action of the Weyl group of $E_6$ on the parameters $(m_1,m_2), (m'_1,m'_2), (m''_1,m''_2)$ 
is obtained by associating these with some roots of $E_6$. It is then seen that the specialised polarised trace algebra $Z_L(sl(3))^{\it{spec}}$ is invariant under the action of the Weyl group of $E_6$ corresponding to its reflection representation. Indeed the expressions arising from the central terms are all given in terms of the fundamental invariant polynomials of $E_6$. Last in Section 6, it is shown that the relations of the algebra $\mathcal{A}^{gen}$ are satisfied by operators of the (generalised) Heun type in the Racah and Hahn algebras. The detailed relations between the parameters involved have been relegated to Appendix A. The paper ends with Section 7 that comprises some concluding remarks.

\section{A Calabi--Yau algebra with three elements.} \label{sec-A}

Let  $\mathcal{P}=\{a_0,a'_0,a_1,a_2,a_3,a_4,a_5,a_6,a_8,a_9\}$ be a set of indeterminates (which we also call central parameters)  and let $\mathbb{C}[\mathcal{P}]$ be the polynomial 
algebra in these indeterminates:
\[\mathbb{C}[\mathcal{P}]=\mathbb{C}[a_0,a'_0,a_1,a_2,a_3,a_4,a_5,a_6,a_8,a_9]\ .\]
\begin{definition}\label{def:cA}
The algebra $\alg$ is the algebra over $\mathbb{C}[\mathcal{P}]$ generated by the elements:
\[A,B,C\,,\]
with the following defining relations:
\begin{equation}\label{relAlg}
\begin{array}{l}
[A,B]=C\,,\\[0.5em]
[A,C]= a_0 B^2 +a_1\{A,B\}+ a_2 A^2 + a_4 B + a_5 A+a_8\,,\\[0.5em]
[B,C]= a'_0 A^3 -a_1 B^2-a_2\{A,B\}+a_3A^2 -a_5 B+a_6A+a_9\,,
\end{array}
\end{equation}
where $\{A,B\}=AB+BA$ stands for the anticommutator.
\end{definition}
Recall that an algebra $F$ is filtered if it has an increasing sequence of subspaces:
\[F_0\subset F_1\subset F_2\subset\dots\subset F_n\subset\dots\quad\ \ \ \ \text{such that}\ \ F=\bigcup_{n\geq 0}F_n\,,\]
compatible with the multiplication in the sense that $F_n\cdot F_m\subset F_{n+m}$\ . 
Defining $F_{-1}=\{0\}$, the associated graded algebra is:
\[gr(F)=\bigoplus_{n\geq 0} F_n/F_{n-1}\,,\]
with well-defined multiplication $(x+F_{n-1})(y+F_{m-1})=xy+F_{n+m-1}$, for $x\in F_n$ and $y\in F_m$. 
The Hilbert--Poincar\'e series of $F$ records the dimensions of the graded components $F_n/F_{n-1}$.

The algebra $\alg$ is filtered when we define the degree of each generator in the following way:
\[deg(A)=3\,,\ \ deg(B)=4\,,\ \ deg(C)=6\ .\]
The subspace $\alg_n$ consists of elements of degree less or equal to $n$. The chosen values of the degrees of $A,B,C$ are natural in view of the 
later realisation of $\alg$ in $U(sl(3))\otimes U(sl(3))$. Implicitly here, one can see the elements in the ground ring $\mathbb{C}[\mathcal{P}]$ as being of degree 0.
When we shall specialise the algebra in the following, the coefficients will acquire the degree that explains their labelling.   

From the defining relations, one sees immediately that the graded algebra $gr(\alg)$ is commutative. As a consequence of the next proposition, $gr(\alg)$ is in fact isomorphic to the polynomial algebra over $\mathbb{C}[\mathcal{P}]$ on three commuting variables $A,B,C$.

\begin{prop}\label{prop-basisA'}
The algebra $\alg$ has a ``PBW basis'', that is, the algebra $\alg$ is free over $\mathbb{C}[\mathcal{P}]$ with basis:
\begin{equation}\label{basisA}
\{A^{\alpha}B^{\beta}C^{\gamma}\ \}_{\alpha,\beta,\gamma\in\mathbb{Z}_{\geq 0}}\ .
\end{equation}
Its Hilbert--Poincar\'e series as a filtered $\mathbb{C}[\mathcal{P}]$-algebra is:
\[\frac{1}{(1-t^3)(1-t^4)(1-t^6)}\ .\]
\end{prop}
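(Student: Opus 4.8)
The plan is to establish this via the Diamond Lemma (Bergman's theory of confluent rewriting systems), which is the standard and most robust tool for proving that a finitely presented algebra with a prescribed set of ordering relations admits a PBW-type basis. First I would set up the reduction system explicitly. The defining relations in \eqref{relAlg} are already almost in the right form: each relation has the shape (ambiguous product) $=$ (lower-degree terms in normal order). Concretely, the relations $[A,B]=C$, $[A,C]=\dots$, and $[B,C]=\dots$ give rewriting rules
\[
BA \rightsquigarrow AB - C, \qquad CA \rightsquigarrow AC - (\dots), \qquad CB \rightsquigarrow BC - (\dots),
\]
where the right-hand sides are expressed purely in terms of the ordered monomials $A^{\alpha}B^{\beta}C^{\gamma}$ (after further reduction of any out-of-order factors they contain). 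The spanning part, that these monomials span $\alg$ over $\mathbb{C}[\mathcal{P}]$, is immediate: any word in $A,B,C$ can be brought to normal order by repeatedly applying the three rules, since each application either moves a larger-ordered generator to the right or strictly decreases the total degree of the leading term.

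The substance of the argument is linear independence, which follows once I verify that the reduction system is confluent, i.e.\ that all \emph{overlap ambiguities} resolve to the same normal form regardless of the order in which reductions are applied. Here I would introduce an ordering on monomials compatible with the filtration: rank words first by total degree (using $\deg A=3,\deg B=4,\deg C=6$) and break ties lexicographically with $A<B<C$. Each rewriting rule sends its leading monomial to a $\mathbb{C}[\mathcal{P}]$-combination of strictly smaller monomials, so this is a genuine reduction order with the descending chain condition, and the Diamond Lemma applies. The overlap ambiguities to check are exactly those coming from triple products where two rules could fire: the critical ones are $CBA$ (overlap of $CB$ and $BA$) and $CAB$-type configurations—more precisely, the single genuine overlap is the word $CBA$, since that is the only triple in which two distinct left-hand sides ($CB$ and $BA$) overlap.

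The hard part, and the one genuine computation, will be resolving the ambiguity on $CBA$: I must reduce $(CB)A$ and $C(BA)$ each to normal form and check that the two results agree. This is equivalent to verifying the Jacobi-type consistency condition
\[
[[A,B],C] + [[B,C],A] + [[C,A],B] = 0
\]
for the chosen right-hand sides, reinterpreted through the fact that $C=[A,B]$ is itself a generator. In practice one substitutes the three defining commutators into the Jacobi identity and checks that the resulting polynomial identity in $A,B$ (with coefficients in $\mathbb{C}[\mathcal{P}]$) holds identically. I expect this to be where the specific coefficients $a_0,a_0',a_1,\dots$ and their labelling are implicitly constrained; the claim that $\alg$ is well-defined with a PBW basis is precisely the assertion that these relations are mutually Jacobi-consistent, so this check must succeed for the stated relations. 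Once confluence is confirmed, the Diamond Lemma guarantees that the irreducible monomials \eqref{basisA} form a free $\mathbb{C}[\mathcal{P}]$-basis.

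Finally, the Hilbert--Poincar\'e series follows formally. Since the monomials $A^\alpha B^\beta C^\gamma$ form a basis and carry filtration-degree $3\alpha+4\beta+6\gamma$, the generating function for the graded dimensions is the product of the three geometric series
\[
\sum_{\alpha\geq 0}t^{3\alpha}\sum_{\beta\geq 0}t^{4\beta}\sum_{\gamma\geq 0}t^{6\gamma}=\frac{1}{(1-t^3)(1-t^4)(1-t^6)}\,,
\]
which is the asserted expression. The statement that $gr(\alg)$ is the polynomial algebra on three commuting variables, noted just before the proposition, is then an immediate corollary: confluence shows the graded relations are exactly $[A,B]=[A,C]=[B,C]=0$ with no further collapse.
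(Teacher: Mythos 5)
Your proposal is correct and follows essentially the same route as the paper: both apply Bergman's Diamond Lemma with the three defining relations read as rewriting rules toward the ordered monomials, identify $CBA$ as the unique overlap ambiguity to resolve, and then read off the Hilbert--Poincar\'e series from the resulting basis. The only cosmetic differences are the tie-breaking order on words of equal degree (you use a lexicographic comparison where the paper uses the misordering index, i.e.\ the number of inversions) and your reformulation of the confluence check as the Jacobi consistency of the prescribed commutators, which is the same verification the paper asserts and omits.
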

\begin{proof}
The Hilbert--Poincar\'e series follows from the knowledge of the basis. To prove that (\ref{basisA}) forms a basis, we use the standard diamond lemma approach from \cite{Be}. 
First we define a partial ordering on the set of words in the generators of $\alg$ and order the three generators by
\[A<B<C\,.\]
Then for a given word $X_1\dots X_k$ in the generators, we already have its degree and we define also its misordering index as
\[n(X_1\dots X_k)=Card\{(i,j)\ |\ 1\leq i<j\leq k\ \text{and}\ X_j<X_i\}\,. \]
Finally for two words $X$ and $Y$ in the generators, we state that $X<Y$ if either $deg(X)<deg(Y)$ or $Y$ is obtained from $X$ by permuting 
the letters and satisfies $n(X)<n(Y)$. 

This defines a partial order on the set of words in the generators which satisfies the two natural conditions required in \cite{Be}. 
Namely, that there is only a finite number of words smaller than a given word $X$ and that for two given words such that $X<Y$, we have $ZXZ'<ZYZ'$ for any two words $Z,Z'$.

Then we interpret the defining relations (\ref{relAlg}) as instructions for rewriting a given word as a linear combination or ordered words in the generators. Explicitly, the instructions are:
\[\begin{array}{c} BA=AB-C\,,\quad CA=AC-(a_0 B^2+a_1(2AB-C)+ a_2 A^2 + a_4 B + a_5 A+a_8)\,,\\[0.5em]
\ CB=BC-\bigl(a'_0 A^3 -a_1 B^2-a_2(2AB-C)+a_3A^2-a_5 B+a_6A+a_9\bigr)\ .\end{array}\]
This set of instructions is compatible with the partial order $<$, in the sense that when applying one of these instructions to a subword of a word, we obtain a linear combination of words which are strictly smaller
in the partial ordering.

The diamond lemma formulated in \cite{Be} asserts that the set of ordered words in $(\ref{basisA})$ is a basis if all ambiguities are resolvable. In our situation, there is only one non-trivial ambiguity to check and the verification to make is the following. Starting from the word $CBA$, we have different possibilities of rewriting it (in fact two, depending on whether we start with reordering $CB$ or $BA$), and we must check that they result in the same linear combination. This is a straightforward verification for which we omit the details.
\end{proof}

\begin{rem}\label{rem1}
 The algebra $\alg$ is linked to algebras studied previously: the specialisation $a_0=a_0'=a_3=0$ corresponds to the Racah algebra \cite{Zhedanov} 
 and the specialisation $a_0=0$ is a particular case of the Heun--Racah algebra \cite{BCTVZ}. 
 The specialisation $a_0=0$ has been also studied in the context of superintegrable models \cite{Mar}.
 There exists also a realisation of the algebra $\alg$ in terms
 of the generators of the Racah algebras: relations \eqref{relAlg} of $\alg$ are the ones between two different Heun--Racah operators (see Section \ref{sec:racah}).
 There is another realisation in terms of the Hahn generators. In this case, the relations of the algebra $\alg$ encode the relations between a Heun--Hahn operator \cite{VZ} and 
 a more generic operator (see Section \ref{sec:hahn}).
\end{rem}

\subsection{Parameters and specialisations.} 

Equivalently, the algebra $\alg$ can be defined directly over $\mathbb{C}$, by promoting the parameters in $\mathcal{P}$ to generators with the additional defining relations that they are central. 
As a consequence of Proposition \ref{prop-basisA'}, a basis of $\alg$ over $\mathbb{C}$ consists of ordered monomials in elements in $\mathcal{P}$ and $A,B,C$, such as:
\[\{\ p_1^{i_1}\dots p_{10}^{i_{10}}A^{\alpha}B^{\beta}C^{\gamma}\ \}_{i_1,\dots,i_{10},\alpha,\beta,\gamma\in\mathbb{Z}_{\geq 0}}\,,\]
for any renaming of elements of $\mathcal{P}=\{a_0,a'_0,a_1,a_2,a_3,a_4,a_5,a_6,a_8,a_9\}$ as $\{p_1,\dots,p_{10}\}$. In this point of view, one can assign also a degree to the central parameters in $\mathcal{P}$, 
such that the algebra $\alg$ over $\mathbb{C}$ remains filtered with a commutative polynomial algebra as its graded algebra. 
It is immediate to see that the degrees of the parameters $a'_0$ and $a_i$ must then be such that the r.h.s. of the defining relations are of degree equal at most to the degree of the corresponding l.h.s. minus 1. 
One finds that the maximal values for the degrees of the parameters $a_i$ must be $i$. The degree of $a'_0$ must be 0. It justifies \textit{a posteriori} the names of the parameters.
These are the degrees which will appear naturally in the realisation in $U(sl(3))\otimes U(sl(3))$.

\vskip .2cm
Now, let $R$ be any complex algebra and consider any map $\vartheta$ from $\mathcal{P}$ to $R$. This is uniquely extended to a morphism of algebra:
\[\vartheta\ :\ \mathbb{C}[\mathcal{P}]\to R\ .\]
Thus, corresponding to $\vartheta$, there is a specialisation of $\alg$ which is $\mathcal{A}_{\vartheta}=\alg\otimes _{ \mathbb{C}[\mathcal{P}]} R$.  The algebra $\mathcal{A}_{\vartheta}$ is an algebra over $R$ and it is simply obtained from $\alg$ by replacing any central parameter $p$ in $\mathcal{P}$ by its chosen value $\vartheta(p)$ in $R$. From Proposition \ref{prop-basisA'}, we have at once that $\mathcal{A}_{\vartheta}$ is free over $R$ with basis consisting of ordered monomials in $A,B,C$:
\[\{A^{\alpha}B^{\beta}C^{\gamma}\ \}_{\alpha,\beta,\gamma\in\mathbb{Z}_{\geq 0}}\ .\]
In particular, we are going to consider the situation where $R$ is another polynomial ring (possibly with less indeterminates), and also the situation where $R=\mathbb{C}$ (for which the central parameters in $\mathcal{P}$ are simply replaced by complex numbers).

\subsection{Algebras with a Calabi--Yau potential}\label{potential}

We wish to point out that the algebra $\alg$ can be derived from a potential and that it shares in this respect a property of most Calabi-Yau algebras \cite{G}, \cite{BRS} of dimension 3. Let $F=\mathbb{C}\langle x_1, x_2, \dots x_n\rangle$ be a free associative algebra with $n$ generators and $F_{cycl}$ the quotient vector space $F/[F,F]$. $F_{cycl}$ has the cyclic words $[x_{i_1} x_{i_2}\dots x_{i_r}]$ as basis. The map $\frac{\partial}{\partial x_j}: F_{cycl} \rightarrow F$ is such that
\begin{equation}
\frac{\partial [x_{i_1}, x_{i_2},\dots x_{i_r}] }{\partial x_j}= \sum_{\{s|i_s=j\}} x_{i_s +1}x_{i_s +2}\dots x_{i_r}x_{i_1}x_{i_2}\dots x_{i_s -1} 
\end{equation} 
and is extended to  $F_{cycl}$ by linearity on combinations of cyclic words. Let $\Phi(x_1,\dots x_n) \in F_{cycl}$. An algebra whose defining relations are given by 
\begin{equation}
\frac{\partial\Phi}{\partial x_j}=0, \qquad j=1,\dots n
\end{equation}
is said to derive from the potential $\Phi$. Now let $x_1=A$, $x_2=B$ and $x_3=C$ and take
\begin{equation}\label{eq:Phi}
\Phi=[ABC]-[BAC]-\frac{a'_0}{4}[A^4]-\frac{a_3}{3}[A^3]+\frac{a_0}{3}[B^3]+a_2[A^2B]+a_1[AB^2]
\end{equation}
 \[-\frac{a_6}{2}[A^2]+a_5[AB]+\frac{a_4}{2}[B^2]-\frac{1}{2}[C^2]-a_9[A]+a_8[B].\]
With this, it is not hard to see that the relations \eqref{relAlg} of $\alg$ are given by 
$\frac{\partial \Phi}{\partial C}=0$, $\frac{\partial \Phi}{\partial B}=0$, $\frac{\partial \Phi}{\partial A}=0$.

\begin{rem}
 As mentioned in Remark \ref{rem1}, $\alg$ is a generalisation of the Racah algebra. Therefore, from the result stated above, the relations of the Racah algebra can be derived from
 the Calabi--Yau potential \eqref{eq:Phi} with $a_0=a_0'=a_3=0$. 
\end{rem}

\vskip .2cm
\paragraph{\textbf{PBW basis and central element.}} We still consider an algebra derived from a potential with three generators $x_1=A$, $x_2=B$ and $x_3=C$, and we give a degree to these generators:
\[deg(A)=a\,,\ \ deg(B)=b\,,\ \ deg(C)=c\,,\ \ \ \ \ \text{and}\ \ \ d:=a+b+c\ .\]
We take a potential of the form:
\[\Phi=\Phi^{(d)}+\Phi^{<d}\,,\ \ \ \ \text{where}\ \Phi^{(d)}=[ABC]-[BAC]\,,\]
and $\Phi^{<d}$ is any linear combination of cyclic words such that $deg(\Phi^{<d})<d$. Then the defining relations of the algebra derived from $\Phi$ are:
\[[A,B]=-\frac{\partial\Phi^{<d}}{\partial C}\,,\ \ \ \ [A,C]=\frac{\partial\Phi^{<d}}{\partial B}\,,\ \ \ \ [B,C]=-\frac{\partial\Phi^{<d}}{\partial A}\,.\]
In the more general setting where $\Phi^{(d)}$ is a so-called homogeneous Calabi--Yau potential of degree $d$ (of which $[ABC]-[BAC]$ is a particular case), it is proven in \cite[Theorem 3.3.2(i)]{EG} (see also \cite{BT}) that the Hilbert--Poincar\'e series of the resulting algebra is:
\[\frac{1}{(1-t^a)(1-t^b)(1-t^c)}\ .\]
For $\Phi^{(d)}=[ABC]-[BAC]$ (and arbitrary $\Phi^{<d}$), a simple proof of a PBW basis and thus of the form of the Hilbert--Poincar\'e series follows exactly the same lines as the proof of Proposition \ref{prop-basisA'}. Namely, the fact that there is no ambiguity when rewriting $CBA$ in the ordered basis becomes equivalent to:
\[A\frac{\partial\Phi^{<d}}{\partial A}+B\frac{\partial\Phi^{<d}}{\partial B}+C\frac{\partial\Phi^{<d}}{\partial C}=\frac{\partial\Phi^{<d}}{\partial A}A+\frac{\partial\Phi^{<d}}{\partial B}B+\frac{\partial\Phi^{<d}}{\partial C}C\ .\]
This equality is easy to check if $\Phi^d$ is a single cyclic word and thus extends to a general combination of cyclic words by linearity.

Again in the general setting  where $\Phi^{(d)}$ is a homogeneous Calabi--Yau potential of degree $d$, it is proven in \cite[Theorem 3.3.2(ii)]{EG} 
that there is a non-trivial central element $\Omega$ of degree $\leq d$. It is pointed out in \cite{EG} that there is no explicit expression for $\Omega$ in the general case. 
However, in a given particular case, a straightforward inspection allows to find this central element. 

\begin{rem}
It is suggested in \cite{EG} that the quotient of the algebra derived from a Calabi--Yau potential by the ideal generated by the central element $\Omega$ can 
be viewed as a non-commutative analogue of a Poisson algebra. It is remarkable that the realisation that we have later in $U(sl(3))\otimes U(sl(3))$ of the 
algebra $\alg$ factors precisely through such a quotient.
\end{rem}

\subsection{The Casimir element of $\alg$.}
As a consequence of the PBW basis, working with the algebra $\alg$ is an algorithmically straightforward procedure. We can use the defining relations as rewriting instructions for ordering any word in the PBW basis. Using this straightforward approach, we check that the following element is central in $\alg$: 
\begin{equation}\label{eq:CAP}
\Omega =x_1A+x_2B+x_3A^2+x_4\{A,B\}
+x_5 B^2
      +x_6A^3+x_7 ABA+x_8 BC-x_{8}AB^2
      +x_{9} A^4+x_{10} B^3+C^2
\end{equation}
where
\[\begin{array}{l}
x_1= \frac{1}{6}( 12 a_9 +3 a_0 a_5 a'_0 + 2 a_4 a_3- a_4 a'_0 a_1 - 
    6 a_6 a_1)\,,\\[0.4em]
x_2= \frac{1}{3}(- 6 a_8-3 a_4 a_2 + a_0 a_4 a'_0 + a_0 a_6  + 6 a_5 a_1)\,,\\[0.4em] 
x_3= \frac{1}{6}( 6 a_6 +3 a_4 a'_0 + 3 a_0 a_2 a'_0 - 4 a_3 a_1 - a'_0 a_1^2)\,,\\[0.4em]
x_4=\frac{1}{3}(-3a_5+a_0a_3+3a_2a_1+a_0a'_0a_1)\,,\\[0.4em]
x_5= \frac{1}{3}(-3 a_4 - 4 a_0 a_2 + a_0^2 a'_0 + 6 a_1^2)\,,\\[0.4em]
x_6= \frac{1}{3}(2 a_3 - a'_0 a_1)\,,\ \ \ x_7= -2 a_2 + a_0 a'_0\,,\\[0.4em]
x_8 =  2 a_1\,,\ \ \  x_{9} =\frac{1}{2}a'_0\,,\ \ \  x_{10}= -\frac{2}{3}a_0\ .
  \end{array}\]
We call the element $\Omega$ the Casimir element of $\alg$. By direct investigations, we prove that it is the (non-scalar) central element of $\alg$ of minimal degree (its degree is $12$).

This central element can be rewritten to underscore the link with the Calabi--Yau potential \eqref{eq:Phi}. Indeed, by replacing the cyclic words in the Calabi--Yau potential $\Phi^{<d}$ by the 
symetrisations of this word, $-2\Phi^{<d}$ leads to the Casimir element up to terms of lower degrees.


\section{Polarised traces and the diagonal centraliser of $U(sl(N))$  \label{sec:pt}}

\subsection{The Lie algebra $sl(N)$\label{sec:lie}} The Lie algebra $gl(N)$ is generated by the elements $\{\, e_{pq}\, |\, 1\leq p,q\leq N\, \}$ which satisfy
the following defining relations
\begin{equation}\label{eq:defglN}
 [e_{pq}, e_{rs} ] = \delta_{qr} e_{ps} - \delta_{sp} e_{rq}\qquad \text{for } 1\leq p,q,r,s\leq N.
\end{equation}
The Lie algebra $sl(N)$ is the quotient of $gl(N)$ by the supplementary relation:
\[\sum_{p=1}^ne_{pp}=0\ .\]

\subsection{Polarised traces} We consider the tensor product of $L$ copies of $U(gl(N))$. For $a\in\{1,\dots,L\}$, we introduce the notation:
\begin{equation}\label{eijeps}
e_{pq}^{(a)}=1^{\otimes a-1} \otimes e_{pq} \otimes 1^{\otimes L-a}\in U(gl(N))^{\otimes L}.
\end{equation}
The following map
\begin{eqnarray}\label{eq:delta}
\delta\quad : \quad  U(gl(N))&\rightarrow& U(gl(N))^{\otimes L}\\
 e_{pq} &\mapsto&  \sum_{a=1}^L  e_{pq}^{(a)}\nonumber
\end{eqnarray}
extends to an algebra homomorphism. The map $\delta$ is called the diagonal map and its image is the diagonal embedding of $U(gl(N))$ in $U(gl(N))^{\otimes L}$.

Let us define the following elements of $U(gl(N))^{\otimes L}$ (from now on the summation over repeated indices will be implicit):
\[T^{(a_1,\dots,a_d)}=e_{i_2i_1}^{(a_1)}e_{i_3i_2}^{(a_2)}\dots e_{i_1i_d}^{(a_d)}\,,\ \ \ \ \ \ a_1,\dots,a_d\in\{1,\dots,L\}\ .\]
By a direct computation, we can show that these elements commute with $\delta(e_{pq})$. Therefore they are in the centraliser of the diagonal embedding of $U(gl(N))$ in $U(gl(N))^{\otimes L}$. 
The elements $T^{(a_1,\dots,a_d)}$ are called polarised traces. 

\begin{definition}
We denote $Z_L(gl(N))$ (respectively, $Z_L(sl(N))$) the subalgebra of $U(gl(N))^{\otimes L}$ (respectively, of $U(sl(N))^{\otimes L}$) generated by all polarised traces $T^{(a_1,\dots,a_d)}$ with $d\geq 1$ and $a_1,\dots,a_d\in\{1,\dots,L\}$.
\end{definition}

\subsection{Results from classical invariant theory} For the classical results on invariant theory that we recall below, we refer to \cite{BS, Pr, Ra} (see also \cite{Dr,Fo} and references therein). 

The first fundamental theorem on the algebra of polynomial functions on $gl(N)\times\dots\times gl(N)$ ($L$ times) which are invariant under simultaneous conjugation by $GL(N)$ asserts that it is generated by the functions:
\begin{equation}\label{functionPT}
(M_1,\dots,M_L)\mapsto \text{Tr}(M_{a_1}M_{a_2}\dots M_{a_d})\,,\ \ \ \text{with $d\geq 1$ and $a_1,\dots,a_d\in\{1,\dots,L\}$.}
\end{equation}
In general, it is furthermore known that a set of generators is obtained by restricting to these functions with $d\leq N^2$. And it is conjectured that it is enough to take $d\leq \displaystyle\frac{1}{2}N(N+1)$. This conjecture is known to be true if $N\in\{2,3,4\}$.

With a standard reasoning that we sketch now, these results provide some information on the algebra of polarised traces in the deformed setting of universal enveloping algebras.

Take $U=U(gl(N))^{\otimes L}$. It is a filtered algebra (the degree of each generator $e_{pq}^{(a)}$ is 1) and recall that its associated graded algebra is the commutative algebra of polynomials in $e_{pq}^{(a)}$. So we can naturally identify the algebra of polynomial functions on $gl(N)\times\dots\times gl(N)$ with this graded commutative algebra:
\[gr(U)=\bigoplus_{n\geq 0} \mathbb{C}_n[e_{pq}^{(a)}]\ ,\]
where $\mathbb{C}_n[e_{pq}^{(a)}]$ is the space of homogeneous polynomials of degree $n$. The identification sends $e_{pq}^{(a)}$ to the linear form giving the $(q,p)$ coordinate of the $a$-th matrix.

The Lie algebra $gl(N)$ acts diagonally on $U=U(gl(N))^{\otimes L}$ (namely, by composing the diagonal map $\delta$ followed by the commutator) and this respects the filtration, $gl(N)\cdot U_n\subset U_n$, so it induces an action on $gr(U)$. Under the above identification, a direct verification shows that this action of $gl(N)$ on $gr(U)$ corresponds to the derivative of the conjugation action of $GL(N)$ on polynomial functions. 
So in particular, invariant functions correspond to elements of $gr(U)$ commuting with all elements $\delta(e_{pq})$. Moreover, the generating invariant functions (\ref{functionPT}) correspond to the images in $gr(U)$ of the polarised traces denoted $T^{(a_1,\dots,a_d)}$.

We call the diagonal centraliser, for brevity, the centraliser of the diagonal embedding of $U(gl(N))$ in $U(gl(N))^{\otimes L}$. Using the result on classical invariants, what we just explained is that the image of the diagonal centraliser in $gr(U)$ is generated by the images of the elements $T^{(a_1,\dots,a_d)}$.  
So in other words, for any element $z$ of degree $n$ in the diagonal centraliser, we have that modulo $U_{n-1}$ the element $z$ can be obtained as a certain polynomial in $T^{(a_1,\dots,a_d)}$. By an easy induction, this gives that any element in the diagonal centraliser is generated by the polarised traces $T^{(a_1,\dots,a_d)}$.

Besides we have also noted in the preceding subsection that the polarised traces $T^{(a_1,\dots,a_d)}$ commute with the diagonal embedding of $U(gl(N))$. So we can conclude that we have the following corollary of the classical invariant theory. 
\begin{corollary}
The algebra $Z_L(gl(N))$ of polarised traces coincides with the centraliser of the diagonal embedding of $U(gl(N))$ in $U(gl(N))^{\otimes L}$.
\end{corollary}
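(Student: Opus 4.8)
The plan is to combine two inclusions that have already been established in the surrounding discussion. On one hand, the polarised traces $T^{(a_1,\dots,a_d)}$ were shown by direct computation to commute with every $\delta(e_{pq})$, so the subalgebra $Z_L(gl(N))$ they generate is automatically contained in the diagonal centraliser. The content of the corollary is therefore the reverse inclusion: every element of the diagonal centraliser lies in $Z_L(gl(N))$. To prove this I would pass to the associated graded algebra $gr(A)$, where $A=U(gl(N))^{\otimes L}$ with its natural filtration by degree in the generators $e_{pq}^{(a)}$, and exploit the identification of $gr(A)$ with the polynomial functions on $gl(N)\times\dots\times gl(N)$ ($L$ copies).

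First I would make precise that the diagonal action of $gl(N)$ on $A$ preserves the filtration, so it descends to $gr(A)$, and that under the identification above it coincides with the derivative of the simultaneous conjugation action of $GL(N)$. Consequently, an element of $gr(A)$ is $gl(N)$-invariant precisely when the corresponding polynomial function is invariant under simultaneous conjugation. Here I would invoke the first fundamental theorem from classical invariant theory, recalled in the excerpt, which states that the invariant polynomial functions are generated by the trace functions $(M_1,\dots,M_L)\mapsto\mathrm{Tr}(M_{a_1}\cdots M_{a_d})$. Under the identification these are exactly the images of the polarised traces $T^{(a_1,\dots,a_d)}$ in $gr(A)$.

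The heart of the argument is then an induction on the filtration degree. Given $z$ in the diagonal centraliser of degree $n$, its image in $A_n/A_{n-1}$ is a $gl(N)$-invariant element of $gr(A)$, hence by the first fundamental theorem equals a polynomial in the images of the $T^{(a_1,\dots,a_d)}$. Choosing a representative $P$ of this polynomial as an honest expression in the polarised traces inside $A$, the difference $z-P$ lies in $A_{n-1}$ and, since both $z$ and $P$ commute with the diagonal $U(gl(N))$, so does $z-P$; thus $z-P$ is again an element of the diagonal centraliser but of strictly smaller degree. By the induction hypothesis $z-P$ belongs to $Z_L(gl(N))$, and therefore so does $z$. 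The base case $n=0$ is the scalars, which lie in the subalgebra trivially.

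The step I expect to require the most care is the graded-to-filtered comparison of \emph{centralisers}: one must check that an element commuting with $\delta(U(gl(N)))$ in the filtered algebra maps to a genuinely invariant element of $gr(A)$ (so that one may apply the classical theorem), and conversely that a choice of polarised-trace representative $P$ for a given invariant leading term does not disturb commutation with the diagonal at lower degrees. Both points follow from the fact that the diagonal action respects the filtration together with the compatibility of the two actions noted above, but they are the places where the passage between the quantum (enveloping algebra) and classical (polynomial) pictures is actually used. The trace-function computations, the verification that the $T^{(a_1,\dots,a_d)}$ commute with $\delta(e_{pq})$, and the explicit identification of leading terms are all routine and I would not grind through them here.
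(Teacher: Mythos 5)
Your proposal is correct and follows essentially the same route as the paper: the easy inclusion from the direct commutation of the polarised traces with $\delta(e_{pq})$, then the reverse inclusion by passing to $gr(A)$, identifying the induced $gl(N)$-action with the derivative of the conjugation action, invoking the first fundamental theorem, and inducting on the filtration degree. The point you flag as delicate (the graded-to-filtered comparison of centralisers) is exactly the one the paper also handles via the compatibility of the diagonal action with the filtration.
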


Further, with the same arguments, we can also extract a finite set of generators for $Z_L(gl(N))$, namely the set of polarised traces $T^{(a_1,\dots,a_d)}$ with $d\leq N^2$. Moreover, for $N\in\{2,3,4\}$, it is enough to take $d\leq \displaystyle\frac{1}{2}N(N+1)$. 

\begin{rem}
All this subsection can be formulated in an obvious way for $Z_L(sl(N))$ with similar statements (one just has to remove the polarised traces of degree $d=1$ from the set of generators).
\end{rem}

\section{The diagonal centraliser in two copies of $sl(3)$}

From now on, we will mainly focus on the situation $N=3$ and $L=2$ of the preceding subsection.

\subsection{Generators of $Z_2(sl(3))$.} From the results in classical invariant theory, we have that $Z_2(sl(3))$ is generated by the polarised traces of degree less or equal to $6$. More precisely, from \cite{Dr} (see also references therein), one can extract the following information. As a subalgebra of $U(sl(3))\otimes U(sl(3))$, the algebra $Z_2(sl(3))$ is filtered. The Hilbert--Poincar\'e series of $Z_2(sl(3))$ is:
\begin{equation}\label{HP-sl3}
\frac{1-t_1^6t_2^6}{(1-t_1^2)(1-t_1t_2)(1-t_2^2)(1-t_1^3)(1-t_1^2t_2)(1-t_1t_2^2)(1-t_2^3)(1-t_1^2t_2^2)(1-t_1^3t_2^3)}\ .
\end{equation}
This is the Hilbert--Poincar\'e series of a subalgebra of the polynomial algebra in the generators $e_{pq}^{(a)}$. 
The coefficient in front of $t_1^kt_2^l$ is the dimension of the graded part of $Z_2(sl(3))$ in degree $k$ in the generators $e_{pq}^{(1)}$ and $l$ in the generators $e_{pq}^{(2)}$. 

A set of generators of $Z_2(sl(3))$ is
\begin{equation}\label{gen-sl3}
T^{(1,1)},\ T^{(1,2)},\ T^{(2,2)},\ T^{(1,1,1)},\ T^{(1,1,2)},\ T^{(1,2,2)},\ T^{(2,2,2)},\ T^{(1,1,2,2)} \quad \text{and} \quad T^{(1,1,2,2,1,2)}\ .
\end{equation}
Moreover, in the graded commutative algebra, we have that the first eight generators are algebraically independent, and thus, looking at the numerator of the Hilbert--Poincar\'e series, 
we see that there is an algebraic relation among the generators in degree $12$ (more precisely, in degree $(6,6)$). We shall describe this relation in Subsection \ref{subsec-pres}.

In classical invariant theory, there is some freedom in the choice of the last generator. However, not every polarised trace of degree $(3,3)$ can serve as generator. 
For example the choice $T^{(1,1,1,2,2,2)}$ is not possible (the rule is that at least two consecutive indices must be equal but not three). In our setting this ambiguity disappears as we can simply remove the last generator from the list since it turns out that $T^{(1,1,2,2,1,2)}$ can be expressed in terms of the other generators. Indeed, one can check that:
\[2T^{(1,1,2,2,1,2)}+[T^{(1,1,2)},T^{(1,1,2,2)}]+\frac{1}{3}T^{(1,1,1)}T^{(2,2,2)}-T^{(1,1,2)}T^{(1,2,2)}-T^{(1,1,2,2)}T^{(1,2)}\ \]
has no term of degree 6, and thus can be expressed in terms of the first eight generators (the explicit expression is $T^{(1,1)}T^{(1,2,2)}-T^{(2,2)}T^{(1,1,2)}-6T^{(1,1,2,2)}+2T^{(1,1)}T^{(2,2)}-12(T^{(1,1,2)}+T^{(1,2,2)})-16 T^{(1,2)}$).

\begin{rem}
It is interesting to note that in our non-commutative setting, $Z_2(sl(3))$ is generated by polarised traces of degree up to $4$, since the last generator is obtained by taking a commutator of two other generators (which is a new phenomenon compared to the classical setting).
\end{rem}

\subsection{Automorphisms.}\label{subsec-aut} Consider the following maps on the generators of $U(sl(3))\otimes U(sl(3))$:
\[\tau\ :\ e_{ij}^{(1)}\mapsto -e_{ji}^{(1)}\ \ \ \ \ \text{and}\ \ \ \ \ \ e_{ij}^{(2)}\mapsto -e_{ji}^{(2)}\,,\]
\[(1,2)\ :\ \ \ e_{ij}^{(1)}\mapsto e_{ij}^{(2)}\ \ \ \ \ \text{and}\ \ \ \ \ \ e_{ij}^{(2)}\mapsto e_{ij}^{(1)}\,.\]
They clearly extend to automorphisms of $U(sl(3))\otimes U(sl(3))$ and, since they leave stable the diagonal embedding of $U(sl(3))$, they restrict to automorphisms of $Z_2(sl(3))$ (one can also check directly that they preserve the algebra generated by the polarised traces).

The third automorphism of $Z_2(sl(3))$ we need is a bit more involved to define. First, we define the map $\theta$ from $U(sl(3))$ to $U(sl(3))\otimes U(sl(3))$ which is the composition of the antipode and of the diagonal embedding. It is an antiautomorphism defined on $g\in sl(3)$ by $\theta(g)= -g\otimes 1-1\otimes g$. Then we define a linear map by
\[\begin{array}{lrcl}(2,3)\ :\ \ \ & U(sl(3))\otimes U(sl(3)) & \to & U(sl(3))\otimes U(sl(3))\\[0.4em]
 & a\otimes b & \mapsto & (a\otimes 1).\theta(b)\end{array}\]
More concretely, the map $(2,3)$ can be described as follows on a word on the generators $e_{ij}^{(1)},e_{ij}^{(2)}$: first, put all generators $e_{ij}^{(1)}$ to the left; then reverse the order of the generators $e_{ij}^{(2)}$ and replace each one of them by $-e_{ij}^{(1)}-e_{ij}^{(2)}$.
 
Note that the map $(2,3)$ is only linear map on $U(sl(3))\otimes U(sl(3))$ and not an algebra automorphism. However, it turns out that the map $(2,3)$ restricts to an algebra automorphism of the centraliser $Z_2(sl(3))$. We could assume this fact at this point, since we only use the maps $\tau,\,(1,2),\,(2,3)$ as a guide to choose convenient generators, and it will easily follow from the algebraic description of $Z_2(sl(3))$ in Theorem \ref{thm-PT} below.

Nevertheless, it can be checked directly as follows. First, note that $\theta$ intertwines the adjoint actions of $sl(3)$: $\theta([g,x])=[g\otimes 1+1\otimes g,\theta(x)]$ for $x\in U(sl(3))$ and $g\in sl(3)$. It follows that the map $(2,3)$ commutes with the adjoint action on $U(sl(3))\otimes U(sl(3))$, and therefore the map $(2,3)$ restricts to a linear map on $Z_2(sl(3))$. Now to check that it becomes an algebra automorphism, take two elements $X=\sum a\otimes b$ and $Y=\sum c\otimes d$ in $Z_2(sl(3))$. Omitting the sum symbols, we calculate $(2,3)(X)$ times $(2,3)(Y)$:
\[(a\otimes 1).\theta(b). (2,3)(Y)=(a\otimes 1). (2,3)(Y).\theta(b)= (a\otimes 1).(c\otimes 1).\theta(d)\theta(b)=(ac\otimes 1)\theta(bd)\ ,\]
and we find that this is equal to $(2,3)(XY)$. In the last equality, we used that $\theta$ is an antiautomorphism. For the first equality, note that $(2,3)(Y)\in Z_2(sl(3))$ since $Z_2(sl(3))$ is stable by $(2,3)$. And as $\theta(b)$ is in the diagonal embedding of $U(sl(3))$, it therefore commutes with $(2,3)(Y)$.

\medskip
We denote $\textbf{Aut}_0$ the group generated by the three automorphisms $\tau,\,(1,2),\,(2,3)$ of $Z_2(sl(3))$. This group has the following structure:
\[\textbf{Aut}_0=\langle\,(1,2),\,(2,3),\,\tau\,\rangle=\langle\,(1,2),\,(2,3)\,\rangle\times \langle\,\tau\,\rangle \cong S_3\times \mathbb{Z}/2\mathbb{Z}\ ,\]
where $S_3$ is the symmetric group on $3$ letters. The subgroup isomorphic to $S_3$ is generated by $(1,2)$ and $(2,3)$ (which are identified with the simple transpositions) and commutes with $\tau$.

\begin{rem}\label{rem-gen}
There is a natural origin to the automorphisms $(1,2)$ and $(2,3)$. 
Let $U=U(sl(3))^{\otimes 3}$ and let $I=U\delta(sl(3))$ be the left ideal in $U$ generated by all elements $\delta(g)$ with $g\in sl(3)$, where $\delta$ is the diagonal embedding. Using the PBW bases, one can check that:
\[U/I\cong U(sl(3))\otimes U(sl(3))\ \ \ \ \text{(as vector spaces).}\] 
This amounts to say that for an element $x\in U$, there is a unique representative modulo $I$ of the form $\overline{x}\otimes 1$, and thus $\overline{x}$ is the corresponding element in $U(sl(3))\otimes U(sl(3))$. Now, on $U$, there are natural linear maps $(1,2)$ and $(2,3)$ permuting the three copies of $U(sl(3))$ and leaving the ideal $I$ invariant. So they pass to the quotient $U/I$, and through the above isomorphism, to linear maps on $U(sl(3))\otimes U(sl(3))$. It is a straightforward exercise to check that the above definition of $(2,3)$ coincides with this one.

Furthermore, in the above isomorphism of vector spaces, one can take on both sides the invariants with respect to the adjoint action of $sl(3)$. This way one gets an isomorphism:
\[\left(U/I\right)^{sl_3}\cong Z_2(sl(3))\ .\]
The left-hand-side is now an algebra, being an example of quantum Hamiltonian reduction (see for example \cite[\S1.1]{EGGO}). Thus the isomorphism becomes an isomorphism of algebras and the linear maps $(1,2)$ and $(2,3)$ descend to algebras automorphisms.
\end{rem}

\subsection{Choice of generators of $Z_2(sl(3))$.} First we analyse the action of $\textbf{Aut}_0$ on the small degree components  of $Z_2(sl(3))$. Straightforward calculations give the matrices representing the action of $\tau,\, (1,2),\,(2,3)$ on the basis $T^{(1,1)},\,T^{(1,2)},\,T^{(2,2)},\,T^{(1,1,1)},\,T^{(1,1,2)},\,T^{(1,2,2)},\,T^{(2,2,2)}$ of polarised traces of degree 2 and 3 (the dots are zeros):
\[\left(\begin{array}{ccccccc}
1 & \cdot & \cdot & -3 & \cdot & \cdot & \cdot\\
\cdot & 1 & \cdot & \cdot & -3 & -3 & \cdot\\
\cdot & \cdot & 1 & \cdot & \cdot & \cdot & -3\\
\cdot & \cdot & \cdot & -1 & \cdot & \cdot & \cdot\\
\cdot & \cdot & \cdot & \cdot & -1 & \cdot & \cdot\\
\cdot & \cdot & \cdot & \cdot & \cdot & -1 & \cdot\\
\cdot & \cdot & \cdot & \cdot & \cdot & \cdot & -1
\end{array}\right)\ \ \ 
\left(\begin{array}{ccccccc}
\cdot & \cdot & 1 & \cdot & \cdot & \cdot & \cdot\\
\cdot & 1 & \cdot & \cdot & \cdot & \cdot & \cdot\\
1 & \cdot & \cdot & \cdot & \cdot & \cdot & \cdot\\
\cdot & \cdot & \cdot & \cdot & \cdot & \cdot & 1\\
\cdot & \cdot & \cdot & \cdot & \cdot & 1 & \cdot\\
\cdot & \cdot & \cdot & \cdot & 1 & \cdot & \cdot\\
\cdot & \cdot & \cdot & 1 & \cdot & \cdot & \cdot
\end{array}\right)\ \ \ 
\left(\begin{array}{ccccccc}
1 & -1 & 1 & \cdot & \cdot & 3 & -3\\
\cdot & -1 & 2 & \cdot & \cdot & 6 & -12\\
\cdot & \cdot & 1 & \cdot & \cdot & \cdot & -3\\
\cdot & \cdot & \cdot & 1 & -1 & 1 & -1\\
\cdot & \cdot & \cdot & \cdot & -1 & 2 & -3\\
\cdot & \cdot & \cdot & \cdot & \cdot & 1 & -3\\
\cdot & \cdot & \cdot & \cdot & \cdot & \cdot & -1
\end{array}\right)\,.\]
For degree 4, in addition to the product of two polarised traces of degree 2, we have $T^{(1,1,2,2)}$ and the automorphisms transform it as follows:
\[\begin{array}{rl}
\tau & :\ T^{(1,1,2,2)}\mapsto\ T^{(1,1,2,2)}+3(T^{(1,1,2)}+T^{(1,2,2)})+9 T^{(1,2)}\,,\\[0.4em]
(1,2) & :\ T^{(1,1,2,2)}\mapsto\ T^{(1,1,2,2)}\,,\\[0.4em]
(2,3) & :\ T^{(1,1,2,2)}\mapsto\ T^{(1,1,2,2)}+T^{(1,1)}T^{(1,2)}+\frac{1}{2}(T^{(1,1)})^2-4 T^{(1,2)}-2 T^{(1,1)}\ .\end{array}\]
By direct inspection, we find the following representations of the automorphism group $\textbf{Aut}_0$:
\begin{itemize}
\item In degree 2, $S_3$ acts through its natural permutation representation and $\tau$ acts by the identity.
\item Including degree 3, it adds for $S_3$ a copy of the permutation representation and a copy of the sign representation; for $\tau$, it adds four times the eigenvalue $-1$.
\item Then, bringing in degree 4, it adds for $S_3$ two copies of the permutation representation (coming from the symmetric square of the representation in degree 2) and a copy of the trivial representation; for $\tau$, it adds only trivial representations.
\end{itemize}
Therefore, it is natural to take for generators of $Z_2(sl(3))$ a set of elements:
\[k_1,k_2,k_3,\ l_1,l_2,l_3,\ X,\ Y\ ,\]
such that the degrees are respectively $2,2,2,3,3,3,3,4$, and moreover such that the action of $\textbf{Aut}_0$ is as follows:
\begin{equation}\label{actS3}
\pi\in S_3\ :\ \ \ \ \ k_i\mapsto k_{\pi(i)}\,,\ \ \ \ l_i\mapsto l_{\pi(i)}\,,\ \ \ \ X\mapsto sgn(\pi)X\,,\ \ \ \ Y\mapsto Y\,,
\end{equation}
\begin{equation}\label{acttau}
\tau\ :\ \ \ \ \ k_i\mapsto k_{i}\,,\ \ \ \ l_i\mapsto -l_{i}\,,\ \ \ \ X\mapsto -X\,,\ \ \ \ Y\mapsto Y\,.
\end{equation}
\begin{rem}
Up to some normalisation and trivial addition of constants, these requirements uniquely fix the choice of the generator $X$. They fix the choice of $Y$ up to the addition of a symmetric polynomial of degree 2 in $k_1,k_2,k_3$. The choice for $Y$ we make is such that $[X,[X,Y]]$ contains no linear term in $Y$.
\end{rem}
We will give now an explicit expression for our generators. First, recall a standard definition of the Casimir elements of $U(sl(3))$:
\[
 \overline C^{(2)}=  e_{i_1 i_2} e_{i_2 i_1}\ \ \ \text{and}\ \ \ \overline C^{(3)}=  e_{i_1 i_2} e_{i_2 i_3}e_{i_3,i_1}\ .
\]
We will use also a slightly different version:
\begin{equation}\label{cas2cas3}
 C^{(2)}=  e_{i_2 i_1} e_{i_1 i_2}\ \ \ \text{and}\ \ \ C^{(3)}=  e_{i_2 i_1} e_{i_3 i_2}e_{i_1,i_3}\ .
\end{equation}
The relation between the two versions in $U(sl(3))$ is:
\begin{equation}
C^{(2)}= \overline C^{(2)}\ ,\quad C^{(3)}= \overline C^{(3)}-3\overline{C}^{(2)}\ .
\end{equation}
With these notations, we set:
\begin{equation}\label{defgen}
\begin{array}{l}
k_1=C^{(2)}\otimes 1\,,\ \ k_2=1\otimes C^{(2)}\,,\ \ k_3=\delta(C^{(2)})\,,\\[0.5em]
\displaystyle l_1=\frac{1}{2}(C^{(3)}+\overline C^{(3)})\otimes 1\,,\ \ l_2=1\otimes \frac{1}{2}(C^{(3)}+\overline C^{(3)})\,,\ \ l_3=-\delta(\frac{1}{2}(C^{(3)}+\overline C^{(3)}))\,,\\[0.8em]
\displaystyle X=\frac{1}{2}(T^{(1,1,2)}-T^{(1,2,2)})+\frac{1}{3}(l_1-l_2)\,,\\[0.8em]
\displaystyle Y=T^{(1,1,2,2)}+\frac{3}{2}(T^{(1,1,2)}+T^{(1,2,2)})-\frac{1}{12}(T^{(1,2)})^2-\frac{5}{12}T^{(1,1)}T^{(2,2)}+\frac{5}{2}T^{(1,2)}\,,\\[0.8em]
\displaystyle Z=[X,Y]\ .
\end{array}
\end{equation} 
\begin{prop}
The elements $k_1,k_2,k_3,l_1,l_2,l_3,X,Y$ given by (\ref{defgen}) are generators of $Z_2(sl(3))$ satisfying the symmetry conditions (\ref{actS3})-(\ref{acttau}).
\end{prop}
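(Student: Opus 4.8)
The plan is to verify two things in turn: that the eight elements of \eqref{defgen} lie in and generate $Z_2(sl(3))$, and that they transform under $\textbf{Aut}_0$ as prescribed by \eqref{actS3}--\eqref{acttau}. Membership is immediate: the $k_i$ and $l_i$ are assembled from the central Casimir elements $C^{(2)},C^{(3)},\overline{C}^{(3)}$ of $U(sl(3))$ (sitting in one tensor factor or in the diagonal image of $\delta$), while $X$ and $Y$ (and hence $Z=[X,Y]$) are polynomials in polarised traces; all of these commute with $\delta(U(sl(3)))$ by the discussion of Section \ref{sec:pt}. For generation, I would exhibit a filtration-triangular, hence invertible, change of variables between $\{k_1,k_2,k_3,l_1,l_2,l_3,X,Y\}$ and the eight polarised-trace generators listed in \eqref{gen-sl3}.

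Concretely, in degree $2$ one has $k_1=T^{(1,1)}$ and $k_2=T^{(2,2)}$, and expanding $\delta(C^{(2)})$ gives $k_3=T^{(1,1)}+2T^{(1,2)}+T^{(2,2)}$, so $T^{(1,1)},T^{(1,2)},T^{(2,2)}$ are recovered linearly. In degree $3$, the identity $\overline{C}^{(3)}=C^{(3)}+3C^{(2)}$ gives $l_1=T^{(1,1,1)}+\tfrac{3}{2}k_1$ and $l_2=T^{(2,2,2)}+\tfrac{3}{2}k_2$; moreover $l_3$ and $X$ furnish, up to lower-degree terms, the symmetric and the antisymmetric combination of $T^{(1,1,2)}$ and $T^{(1,2,2)}$, so both are recovered. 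Finally the leading part of $Y$ is $T^{(1,1,2,2)}$, the remaining monomials being products of lower generators, so $T^{(1,1,2,2)}$ is recovered as well. Since this transformation is triangular with respect to the filtration degree and invertible on each graded piece, the two families generate the same subalgebra, which is $Z_2(sl(3))$ by the classical invariant theory results recalled earlier in this section.

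For the symmetry conditions, the automorphisms $(1,2)$ and $\tau$ are handled by direct computation from the definitions. The flip $(1,2)$ exchanges the two tensor factors, giving at once $k_1\leftrightarrow k_2$, $k_3\mapsto k_3$, $l_1\leftrightarrow l_2$, $l_3\mapsto l_3$ and $Y\mapsto Y$, while the defining combination of $X$ yields $X\mapsto -X$, as required for a transposition in \eqref{actS3}. For $\tau$ one uses $\tau(e_{ij}^{(a)})=-e_{ji}^{(a)}$, which fixes $C^{(2)}$ but sends $C^{(3)}\mapsto -\overline{C}^{(3)}$ and $\overline{C}^{(3)}\mapsto -C^{(3)}$; this is exactly why the $\tau$-eigencombination $\tfrac12(C^{(3)}+\overline{C}^{(3)})$ (of eigenvalue $-1$) is used in defining the $l_i$, producing $k_i\mapsto k_i$ and $l_i\mapsto -l_i$, and a short check gives $X\mapsto -X$, $Y\mapsto Y$, matching \eqref{acttau}.

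The delicate case is $(2,3)$, which is only declared on the eight generators. I would first make the $S_3$-action on the $k_i$ and $l_i$ transparent by setting $m_1=e^{(1)}$, $m_2=e^{(2)}$, $m_3=-e^{(1)}-e^{(2)}$, so that $m_1+m_2+m_3=0$ and $\langle(1,2),(2,3)\rangle\cong S_3$ permutes the $m_i$. One then checks that $k_i=C^{(2)}(m_i)$ and $l_i=\tfrac12(C^{(3)}+\overline{C}^{(3)})(m_i)$ for every $i$, the absence of a sign on $k_3$ and the minus sign on $l_3$ in \eqref{defgen} compensating precisely the homogeneity factors $(-1)^2=1$ and $(-1)^3=-1$ arising from $\delta\leftrightarrow m_1+m_2=-m_3$; the prescribed permutation action on the $k_i$ and $l_i$ in \eqref{actS3} then follows formally. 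It remains to show $X\mapsto -X$ and $Y\mapsto Y$ under $(2,3)$, which I expect to be the main obstacle. Here the rule is that $(2,3)$ substitutes $M_2\mapsto -M_1-M_2$ in each polarised trace and re-expands multilinearly, as in the stated example $T^{(1,1,2)}\mapsto -T^{(1,1,1)}-T^{(1,1,2)}$. Rewriting $X$ and $Y$ through the eight generators and applying this substitution produces a finite but intricate computation; the invariance of $Y$ is the most sensitive point, because $Y$ is pinned down only up to a degree-two polynomial in $k_1+k_2+k_3$, so one must verify that the specific coefficients in \eqref{defgen} yield exact invariance rather than invariance modulo such a polynomial. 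Confirming that all the lower-degree corrections cancel is where the real work lies.
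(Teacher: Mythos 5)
Your proposal is correct and follows the same route as the paper, whose entire proof is the one-line statement that ``everything can be checked easily by straightforward calculations''; you have simply spelled out the structure of those calculations (the filtration-triangular change of variables to the polarised-trace generators, the sign bookkeeping for $\tau$ via $\tau(C^{(3)})=-\overline{C}^{(3)}$, and the reformulation of the $S_3$-action via $m_1+m_2+m_3=0$). Your identification of the $(2,3)$-invariance of $Y$ as the only genuinely delicate verification is accurate, and the remaining work is indeed a finite direct computation.
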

\begin{proof}
Everything can be checked easily by straightforward calculations.
\end{proof}
\begin{rem}
In terms of polarised traces, it is not difficult to verify that:
\[
\begin{array}{l}
k_1=T^{(1,1)}\,,\ \ k_2=T^{(2,2)}\,,\ \ k_3=T^{(1,1)}+T^{(2,2)}+2T^{(1,2)}\,,\\[0.5em]
\displaystyle l_1=T^{(1,1,1)}+\frac{3}{2}k_1\,,\ \ l_2=T^{(2,2,2)}+\frac{3}{2}k_2\,,\ \ l_3=-l_1-l_2-3(T^{(1,1,2)}+T^{(1,2,2)})-9T^{(1,2)}\,.
\end{array}
\]
\end{rem}

\subsection{A realisation of $\alg$ in $U(sl(3))\otimes U(sl(3))$} 

We start by introducing a particular case $\mathcal{A}$ of the algebra $\alg$ of Section \ref{sec-A}, which will turn out to be the one relevant for the description of $Z_2(sl(3))$.

In the definition below, we consider $k_1,k_2,k_3,l_1,l_2,l_3$ as indeterminates (central parameters). The symmetric group $S_3$ acts on $\mathbb{C}[k_1,k_2,k_3,l_1,l_2,l_3]$ by permuting $\{k_1,k_2,k_3\}$ and permuting $\{l_1,l_2,l_3\}$. We denote:
\[Sym^+:=\frac{1}{6}\sum_{\pi\in S_3}\pi\ \ \ \ \ \text{and}\ \ \ \ \ Sym^-:=\frac{1}{6}\sum_{\pi\in S_3}sgn(\pi)\pi\ .\]
\begin{definition}\label{def:cA2}
The algebra $\mathcal{A}$ is the algebra over $\mathbb{C}[k_1,k_2,k_3,l_1,l_2,l_3]$ generated by $A,B,C$ with the following defining relations:
\begin{equation}\label{relAlg2}
\begin{array}{l}
[A,B]=C\,,\\[0.5em]
[A,C]= -6B^2 + a_2 A^2+ a_5 A+a_8\,,\\[0.5em]
[B,C]= -2A^3-a_2\{A,B\} -a_5 B+a_6A+a_9\,,
\end{array}
\end{equation}
where:
\[a_2=\frac{1}{2}(k_1+k_2+k_3)\,,\ \ \ \ \ \ a_5=Sym^-(2k_2l_1)\ ,\]
\[\begin{array}{rl}
a_6=Sym^+\Bigl(
 & \frac{1}{16}(-k_1^3+2k_1^2k_2-6k_1k_2k_3)+\frac{3}{8}(k_1^2-2k_1k_2)+\frac{1}{6}l_1^2-\frac{5}{3}l_1l_2
\Bigr)\end{array},\]
\[\begin{array}{rl}
a_8=Sym^+\Bigl( & 
\frac{1}{128}(k_1^4-8k_1^3k_2+6k_1^2k_2^2+4k_1^2k_2k_3)-\frac{1}{32}(3k_1^3-6k_1^2k_2+2k_1k_2k_3)\\
 & -\frac{1}{24}k_1(l_1^2-6l_2^2+4l_1l_2-26l_2l_3)+\frac{1}{2}l_1^2+l_1l_2
\Bigr)\end{array}\]
\[\begin{array}{rl}
a_9=Sym^-\Bigl(
 & l_2\bigl(-\frac{4}{9}l_1^2+\frac{1}{24}(k_1^3-2k_1^2k_2+k_1k_2^2+9k_1^2k_3)+\frac{1}{2}(k_1^2-k_1k_2)\bigr)\Bigr)\end{array},\]
\end{definition}
Clearly, the algebra $\mathcal{A}$ can be seen as a specialisation of the generic algebra $\alg$ and as such has a PBW basis as explained in Section \ref{sec-A}. The use of the same names as some generators of $Z_2(sl(3))$ is a convenient slight abuse of notation and is justified by the next proposition. Below, it is understood that through $\phi$, the parameters $k_1,k_2,k_3,l_1,l_2,l_3$ are sent to the generators sharing the same names.
\begin{prop}\label{prop-morA}
The following map $\phi$ extends to a surjective morphism of algebras from $\mathcal{A}$ to $Z_2(sl(3))$:
\begin{equation}\label{eq-morA}
\phi\ :\ \ \ \ A\mapsto X\,,\qquad B\mapsto Y\,,\qquad C\mapsto Z\,.
\end{equation}
\end{prop}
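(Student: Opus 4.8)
The plan is to verify that the map $\phi$ is a well-defined homomorphism and then to establish surjectivity. Since $\mathcal{A}$ is presented by generators $A,B,C$ over $\mathbb{C}[k_1,\dots,l_3]$ subject to the three relations \eqref{relAlg2}, showing that $\phi$ extends to an algebra morphism amounts to checking that the images $X,Y,Z$ satisfy the very same three relations in $Z_2(sl(3))$, with the central parameters $k_i,l_i$ replaced by the generators of $Z_2(sl(3))$ bearing the same names, and with $a_2,a_5,a_6,a_8,a_9$ replaced by the explicit polynomial expressions given in Definition \ref{def:cA2}. The first relation $[A,B]=C$ holds by construction, since $Z$ is \emph{defined} as $[X,Y]$ in \eqref{defgen}. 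What remains is to verify the two commutators $[X,Z]$ and $[Y,Z]$ expand exactly as the right-hand sides of the second and third lines of \eqref{relAlg2}.

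First I would reduce both verifications to computations inside the filtered algebra $U(sl(3))\otimes U(sl(3))$, using the explicit definitions \eqref{defgen} of $X$, $Y$ and hence $Z=[X,Y]$ in terms of polarised traces, together with the identifications of $k_i,l_i$ as traces recorded in the preceding remark. Concretely, one expands $[X,[X,Y]]$ and $[Y,[X,Y]]$ using the commutation relations \eqref{eq:defglN} of $gl(3)$ (equivalently, the known brackets among polarised traces), reduces the resulting elements to the PBW-ordered monomials in the generators $k_1,k_2,k_3,l_1,l_2,l_3,X,Y$ of $Z_2(sl(3))$, and compares against the claimed right-hand sides. Because every generator of $Z_2(sl(3))$ commutes with the diagonal $\delta(U(sl(3)))$, the resulting brackets automatically lie in the centraliser, so the real content is matching coefficients. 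The symmetry conditions \eqref{actS3}--\eqref{acttau} under $\textbf{Aut}_0\cong S_3\times\mathbb{Z}/2\mathbb{Z}$ give a strong consistency check and in fact dictate the form of the answer: since $[A,C]$ must be even under $\tau$ and symmetric (trivial) under $S_3$, its coefficients can only be built from $S_3$-symmetric, $\tau$-even combinations of the $k_i,l_i$, which is exactly the shape of $a_2,a_6,a_8$ (built from $Sym^+$) together with the $a_5$ and $a_9$ carrying the appropriate signs via $Sym^-$. This explains a priori why the coefficients take the invariant-theoretic form displayed, and narrows the verification to fixing the overall rational constants.

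For surjectivity, I would invoke the structural results already established. The elements $k_1,k_2,k_3,l_1,l_2,l_3,X,Y$ are generators of $Z_2(sl(3))$ by the Proposition preceding this one. Under $\phi$ the central parameters $k_i,l_i$ map to the generators $k_i,l_i$, while $A,B$ map to $X,Y$; hence the image of $\phi$ contains $k_1,\dots,l_3,X,Y$, which already generate all of $Z_2(sl(3))$. (The generator $Z=[X,Y]$ is in the image as well, being $\phi(C)$.) Therefore $\phi$ is surjective, completing that half immediately once well-definedness is in hand.

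The main obstacle is the well-definedness step, namely the explicit expansion of $[X,Z]$ and $[Y,Z]$ and the verification that the coefficients $a_2,a_5,a_6,a_8,a_9$ come out exactly as in Definition \ref{def:cA2}. These are genuinely cubic and quartic expressions in the Casimir-type elements and polarised traces, so the computation is lengthy; I expect it to be carried out either by a direct (machine-assisted) expansion in the PBW basis of $Z_2(sl(3))$, or more cleverly by combining three ingredients: the degree bookkeeping (under $\deg(A)=3,\deg(B)=4,\deg(C)=6$ the possible terms on each right-hand side are tightly constrained), the $\textbf{Aut}_0$-equivariance just discussed (which forces the symmetry type of each coefficient), and evaluation on a few explicit representations to pin down the remaining numerical constants. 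The equivariance and degree constraints do most of the conceptual work; the residual effort is the essentially routine, if tedious, determination of scalars, which I would relegate to a direct check.
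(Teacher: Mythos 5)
Your proposal is correct and follows essentially the same route as the paper: a direct (computer-assisted) verification of the two nontrivial commutator relations $[X,Z]$ and $[Y,Z]$ by rewriting in the PBW basis of $U(sl(3))\otimes U(sl(3))$, with surjectivity immediate from the fact that $k_1,\dots,l_3,X,Y$ generate $Z_2(sl(3))$. The only point the paper makes explicit that you gloss over is that $k_3$ and $l_3$, being of the form $\delta(c)$, are central in $Z_2(sl(3))$ (needed so that the parameters of $\mathcal{A}$ land on central elements); your added symmetry and degree considerations are a sensible consistency check but play no role in the paper's argument.
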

\begin{proof}
We first note that the elements $k_1,k_2,k_3,l_1,l_2,l_3$ are all central in $Z_2(sl(3))$. 
Indeed, $k_1,k_2,l_1,l_2$ are already central in $U(sl(3))\otimes U(sl(3))$ since they are of the form $c\otimes 1$ or $1\otimes c$ with $c$ a central element of $U(sl(3))$. 
Concerning $k_3$ and $l_3$, we see from their definitions (\ref{defgen}) that they are of the form $\delta(c)$ with $c\in U(sl(3))$. Those elements $\delta(c)$ being in the image of the diagonal map commute with all elements of $Z_2(sl(3))$. 

It remains to calculate in $U(sl(3))\otimes U(sl(3))$ the commutators $[X,Z]$ and $[Y,Z]$ to see that they satisfy the defining relations of $\alg$ when the images of the central elements are given as in the statement of the Proposition. This is a straightforward but very lengthy calculation which can be done with the help of a computer, 
by rewriting all elements involved in the PBW basis of $U(sl(3))\otimes U(sl(3))$, and comparing both sides of the given relations.
\end{proof}
\begin{rem} In the explicit realisation of $\mathcal{A}$ in $U(sl(3))\otimes U(sl(3))$ given above, the images of the generators of $\mathcal{A}$ (as an algebra over $\mathbb{C}$) have the following degrees:
\begin{equation}\label{deg-A}
\begin{array}{c}
deg(X)=3\,,\ \ \ \ \ deg(Y)=4\,,\ \ \ \ \ (\text{and thus } deg(Z)=6)\,,\ \ \ \ \deg(a_i)=i\ .
\end{array}
\end{equation}
It is remarkable that the degrees of the non vanishing central elements $a_i$ are 2,5,6,8 and 9 which are exactly the first five fundamental degrees of type $E_6$. 
The missing degree is 12, but such a degree 12 polynomial will also appear naturally in the complete description of $Z_2(sl(3))$ (see next subsection). 
This ``coincidence'' will be largely developed in the next section.
\end{rem}
\begin{rem} The parameters $a_1,a_3$ and $a_4$ of the generic algebra $\alg$ are sent to $0$ in the specialisation in $\mathcal{A}$ and therefore also in $Z_2(sl(3))$. 
However, a different choice of generators $X,Y$ would result in non-zero values for $a_3$ and $a_4$. 
Let us also point out that a non-zero coefficient $a_1$ would also appear if we were looking at $Z_2(gl(3))$.
\end{rem}

\begin{rem}
 As mentioned in the Introduction, another specialisations of the algebra $\alg$ has been used in \cite{racah} in a study of the centraliser of $so(3)$ in $su(3)$.
\end{rem}

\subsection{The complete description of $Z_2(sl(3))$.}\label{subsec-pres}
At this point, we know that the algebra $Z_2(sl(3))$ is isomorphic to a quotient of the algebra $\mathcal{A}$. Our goal now is to complete the description of $Z_2(sl(3))$ by identifying the kernel of the map $\phi$.

To guide us, we have on one side the PBW basis of the algebra $\mathcal{A}$, which asserts that as a vector space, this is a polynomial algebra. And on the other side, we have the Hilbert--Poincar\'e series of $Z_2(sl(3))$ given in (\ref{HP-sl3}). 
The comparison of both descriptions tells us that we must have one more relation satisfied by the images of the generators of $\mathcal{A}$ in $Z_2(sl(3))$. 
Moreover, this relation has to be found at the degree 12 (or more precisely at degree (6,6) in $U(sl(3))\otimes U(sl(3))$).

Recall that the algebra $\mathcal{A}$ inherits from $\alg$ a Casimir element, which was given in (\ref{eq:CAP}). 
In the specialisation $\mathcal{A}$, its expression simplifies to:
\begin{equation}\label{eq:CAP2}
\Omega =x_1A+x_2B+x_3A^2+x_4\{A,B\}
+x_5 B^2+x_7 ABA-A^4+4B^3+C^2
\end{equation}
where (with the parameters $a_2,a_5,a_6,a_8,a_9$ given in Definition \ref{def:cA2}):
\begin{equation}
 x_1= 6 a_5  + 2 a_9\,,\ \ \ x_2= -2a_6 - 2 a_8\,,\ \ \ x_3= 6 a_2 + a_6\,,\ \ \ x_4=-a_5\,,\ \ \ x_5= 8 a_2 -24\,,\ \ \ x_7= -2 a_2 +12\ . \label{eq:xx}
\end{equation}

Remarkably, the image of this central element $\Omega$ in $Z_2(sl(3))$ is of degree 12 and it turns out that we have in $Z_2(sl(3))$:
\[\phi(\Omega)=a_{12}(k_1,k_2,k_3,l_1,l_2,l_3)\,,\]
where the polynomial $a_{12}(k_1,k_2,k_3,l_1,l_2,l_3)$ is equal to:
\begin{equation}\label{eq:a12}
\begin{array}{l}
Sym^+\Bigl( \frac{1}{4608}k_1^6 - \frac{1}{768}k_1^5 (1 + 2 k_2) + 
\frac{1}{768} k_1^4 (-39 + 6 k_2 + 5 k_2^2 + 3 k_2 k_3)-\frac{1}{1152}k_1^3 (-648 - 468 k_2 \\[0.5em]
\quad + 6 k_2^2 + 5 k_2^3 - 6 k_2 k_3 + 6 k_2^2 k_3 + 
    2 l_1^2 + 8 l_1 l_2 - 12 l_2^2 + 140 l_2 l_3) - \frac{1}{2304}k_1^2 (2592 k_2 + 702 k_2^2 \\[0.5em]
\quad + 468 k_2 k_3 + 42 k_2^2 k_3 + k_2^2 k_3^2 - 
    40 l_1^2 - 40 k_2 l_1^2 + 416 l_1 l_2 - 16 k_2 l_1 l_2 + 144 k_3 l_1 l_2 - 464 l_2^2 \\[0.4em]
\quad + 56 k_2 l_2^2 - 360 k_3 l_2^2 - 560 l_2 l_3 - 
    720 k_3 l_2 l_3)  -\frac{1}{288} k_1 (-108 k_2 k_3 - 24 l_1^2 + 68 k_2 l_1^2 + 17 k_2 k_3 l_1^2  \\[0.4em]
\quad - 672 l_1 l_2 - 52 k_2 l_1 l_2 - 296 k_3 l_1 l_2 - 2 k_2 k_3 l_1 l_2 - 48 l_2^2 - 206 k_3 l_2^2 + 1392 l_2 l_3)
    + \frac{1}{432}l_1 (-864 l_1 \\[0.4em]
\quad + l_1^3 - 1728 l_2 + 24 l_1^2 l_2 - 26 l_1 l_2^2 +  244 l_1 l_2 l_3)
\Bigr)\,.
\end{array}
\end{equation}
This formula is checked by a straightforward calculation directly in $U(sl(3))\otimes U(sl(3))$. This can be done using a computer to express both sides in a PBW basis of $U(sl(3))\otimes U(sl(3))$.
\begin{thm}\label{thm-PT}
The algebra $Z_2(sl(3))$ is the quotient of the algebra $\mathcal{A}$ by the following additional relation:
\begin{equation}\label{imcas}
\Omega=a_{12}(k_1,k_2,k_3,l_1,l_2,l_3)\ .
\end{equation}
\end{thm}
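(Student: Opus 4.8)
The plan is to prove that the kernel of the surjective morphism $\phi:\mathcal{A}\to Z_2(sl(3))$ is exactly the two-sided ideal generated by the single element $\Omega-a_{12}(k_1,\dots,l_3)$, where the relation $\phi(\Omega)=a_{12}$ has already been verified by direct computation. Since $\phi$ is surjective by Proposition \ref{prop-morA}, what remains is a dimension count in each filtered degree. I would set $I=\langle\,\Omega-a_{12}\,\rangle$ as an ideal of $\mathcal{A}$, note that $I\subseteq\ker\phi$ (because $\phi(\Omega-a_{12})=0$ and $\phi$ is a morphism over the central ring $\mathbb{C}[k_1,\dots,l_3]$, so it kills the whole ideal generated by a central element), and then show that $\mathcal{A}/I$ and $Z_2(sl(3))$ have the same Hilbert--Poincar\'e series. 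Combined with the surjection $\mathcal{A}/I\twoheadrightarrow Z_2(sl(3))$ induced by $\phi$, equality of the series forces this induced map to be an isomorphism, whence $\ker\phi=I$.

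The heart of the argument is therefore computing the Hilbert--Poincar\'e series of $\mathcal{A}/I$ and matching it to \eqref{HP-sl3}. First I would observe that $\Omega$ is central of degree $12$ with leading term $C^2$ in the PBW filtration (from \eqref{eq:CAP2}); since $a_{12}$ has degree $12$ as well, the element $\Omega-a_{12}$ has a well-defined nonzero symbol in $gr(\mathcal{A})$. Because $gr(\mathcal{A})$ is the polynomial ring $\mathbb{C}[\mathcal{P}][A,B,C]$ and the leading symbol of $\Omega-a_{12}$ is a nonzerodivisor there (its top-degree part involves $C^2$, $-A^4$, $4B^3$, hence is not a zero divisor in the polynomial ring), the quotient by the principal ideal generated by this symbol multiplies the Hilbert series by the factor $(1-t^{12})$. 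Passing from the graded to the filtered setting, I would argue that $gr(\mathcal{A}/I)$ is a quotient of $gr(\mathcal{A})/(\mathrm{symbol})$, which already gives the upper bound on dimensions; thus the series of $\mathcal{A}/I$ is termwise bounded above by
\[\frac{1-t^{12}}{(1-t^3)(1-t^4)(1-t^6)}\cdot\frac{1}{\prod(1-t^{\deg p})}\,,\]
where the last product accounts for the central parameters now specialised to the polynomials $a_2,a_5,a_6,a_8,a_9$ in $k_i,l_i$.

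The main obstacle is matching this bound exactly to the known series \eqref{HP-sl3} of $Z_2(sl(3))$ and ruling out any further collapse. I would handle the matching by tracking the bigrading in $(t_1,t_2)$ coming from the two $sl(3)$ factors: under $\phi$ the generators $X,Y,Z$ and the central $k_i,l_i$ carry explicit bidegrees, and the numerator $1-t_1^6t_2^6$ in \eqref{HP-sl3} corresponds precisely to the relation $\Omega=a_{12}$ sitting in bidegree $(6,6)$. The key point to nail down is that no relation beyond this single one can occur: this follows because the surjection $\mathcal{A}/I\twoheadrightarrow Z_2(sl(3))$ cannot strictly decrease dimensions once the upper bound already equals the target series. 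Concretely, I would verify that the upper-bound series displayed above, after substituting the bidegrees, equals \eqref{HP-sl3}; since the surjection then forces equality in every graded piece, $\mathcal{A}/I\cong Z_2(sl(3))$ and the theorem follows. The only genuinely delicate step is confirming that the symbol of $\Omega-a_{12}$ is a nonzerodivisor (so that the factor is exactly $1-t^{12}$ with no correction), which I expect to check directly from the explicit top-degree terms $C^2-A^4+4B^3$ in \eqref{eq:CAP2}.
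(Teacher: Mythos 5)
Your proposal is correct and follows essentially the same route as the paper: surjectivity of $\phi$ from Proposition \ref{prop-morA}, the computational fact that $\phi(\Omega)=a_{12}$, and a Hilbert--Poincar\'e series comparison between $\mathcal{A}/\langle\Omega-a_{12}\rangle$ and $Z_2(sl(3))$. The only (harmless) difference is in how the factor $(1-t^{12})$ is justified: the paper exhibits an explicit basis of the quotient by solving the relation for $C^2$ (ordered monomials with $C$-exponent $0$ or $1$), whereas you bound the graded quotient by the principal ideal generated by the nonzerodivisor symbol of $\Omega-a_{12}$ and let the surjection close the gap -- which works equally well.
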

\begin{proof}
Recall that we have the surjective morphism $\phi$ from Proposition \ref{prop-morA} from $\mathcal{A}$ to $Z_2(sl(3))$. Moreover, as already said, the relation (\ref{imcas}) is satisfied by the image of $\Omega$ in $U(sl(3))\otimes U(sl(3))$. So the element $\Omega-a_{12}(k_1,k_2,k_3,l_1,l_2,l_3)$ does belong to the kernel of the morphism $\phi$.

Both algebras $\mathcal{A}$ and $Z_2(sl(3))$ are filtered complex algebras. The degrees in $\mathcal{A}$ are given by:
\[deg(k_i)=2\,,\ \ \ deg(l_i)=deg(A)=3\,,\ \ deg(B)=4\,,\ \ deg(C)=6\,,\]
and the degrees in $Z_2(sl(3))$ come naturally from the degrees in $U(sl(3))\otimes U(sl(3))$ (the generators $e_{ij}^{(a)}$ are of degree 1). 
Then it is immediate that $\phi$ is a morphism of filtered algebras (the degrees are preserved).

Thus it remains only to show that the Hilbert--Poincar\'e series of the quotient of $\mathcal{A}$ by the additional relation (\ref{imcas}) 
coincides with the Hilbert--Poincar\'e series of $Z_2(sl(3))$. From the PBW basis of $\mathcal{A}$, we have that its Hilbert--Poincar\'e series in one variable is:
\[
\frac{1}{(1-t^2)^3(1-t^3)^4(1-t^4)(1-t^6)}\ .
\]
Adding the relation (\ref{imcas}) has the effect of multiplying the numerator by a factor $(1-t^{12})$. Indeed, the relation expresses $C^2$ in the form $x+yC$ where $x,y$ involves only the other generators. Thus, using the PBW basis of $\mathcal{A}$, it is easy to see that a basis of the quotiented algebra consists of all ordered monomials in $k_1,k_2,k_3,l_1,l_2,l_3,A,B,C$ with the restriction that the power of $C$ must be $0$ or $1$.

So we end up with the Hilbert--Poincar\'e series of $Z_2(sl(3))$ given in (\ref{HP-sl3}) when the bidegree is reduced to a single degree. The proof is concluded.
\end{proof}

\section{Highest-weight specialisation of $Z_2(sl(3))$ and $E_6$ symmetry}

\subsection{A highest-weight specialisation of $\mathcal{A}$.}
Consider a highest weight representation $V_{m_1,m_2}$ of $U(sl(3))$. It is parametrised by two complex numbers $m_1,m_2$, and generated as a $U(sl(3))$-module by a highest weight vector $v_{m_1,m_2}$ satisfying
\begin{equation}\label{high-weight-rep}
\begin{array}{rcl} h_{p}v_{m_1,m_2}&=& (m_p-1) v_{m_1,m_2} \quad \text{with} \quad p=1,2\,,\\[0.5em]
 e_{pq}v_{m_1,m_2}&=&0 \quad \text{with} \quad 1\leq p< q \leq 3 \,,
\end{array}
\end{equation}
where we set $h_p=e_{pp}-e_{p+1,p+1}$.
\begin{rem}
The standard convention would denote $(m_1-1,m_2-1)$ the highest weight of the representation $V_{m_1,m_2}$ since $m_1-1$ and $m_2-1$ are the coefficients in front of the fundamental weights. 
For example, if $m_1-1,m_2-1\in\mathbb{Z}_{\geq 0}$ and $V_{m_1,m_2}$ is the corresponding finite-dimensional irreducible representation of $sl(3)$, then it is usually represented with a Young diagram with $m_1+m_2-2$ boxes in the first row and $m_2-1$ boxes in the second row.

We find our parametrisation more convenient for the purpose of this paper since it includes from the beginning the translation by the half sum of the positive roots of $sl(3)$.
\end{rem}
 
In the representation $V_{m_1,m_2}$, the Casimir elements $C^{(2)}$ and $C^{(3)}$ of $U(sl(3))$ defined in (\ref{cas2cas3}) are proportional to the identity matrix, their values are given by:
\begin{eqnarray}
c^{(2)}(m_1,m_2)&=&\frac{2}{3}( m_1^2+ m_2^2+ m_1 m_2)-2\,, \label{value-Cas1}\\
c^{(3)}(m_1,m_2)&=&\frac{1}{9}(m_1+2m_2-3)(2m_1+m_2+3)(m_1-m_2-3)\ . \label{value-Cas2}
\end{eqnarray}

Now pick three pairs of complex numbers $(m_1,m_2)$, $(m'_1,m'_2)$ and $(m''_1,m''_2)$, and look at the tensor product of representations $V_{m_1,m_2}\otimes V_{m'_1,m'_2}$. In this space, consider all the highest-weight vectors under the diagonal action of $U(sl(3))$ corresponding to $(m''_1,m''_2)$, that is, all vectors satisfying for the diagonal action Formulas (\ref{high-weight-rep}) with $m_1,m_2$ replaced by $m''_1,m''_2$. 
The subspace generated by all these vectors as a $U(sl(3))$-module is denoted by:
\[M_{m_1,m_2,m_1',m_2'}^{m''_1,m''_2}\ .\]
\begin{ex}\label{exM}
In particular, if the three pairs are integers in $\mathbb{Z}_{\geq 1}$ and $V_{m_1,m_2},V_{m'_1,m'_2},V_{m''_1,m''_2}$ are the corresponding finite-dimensional irreducible representations, then the subspace $M_{m_1,m_2,m_1',m_2'}^{m''_1,m''_2}$ is the isotypic component of $V_{m''_1,m''_2}$ in the tensor product $V_{m_1,m_2}\otimes V_{m'_1,m'_2}$. In this case, we have:
\[
M_{m_1,m_2,m_1',m_2'}^{m''_1,m''_2}=D_{m_1,m_2,m_1',m_2'}^{m''_1,m''_2}V_{m''_1,m''_2}\,,
\]
where the positive integer $D_{m_1,m_2,m_1',m_2'}^{m''_1,m''_2}$ is the multiplicity of $V_{m''_1,m''_2}$ in $V_{m_1,m_2}\otimes V_{m'_1,m'_2}$ which is called the Littlewood--Richardson coefficient.
\end{ex}
As a subalgebra of $U(sl(3))\otimes U(sl(3))$, the diagonal centraliser $Z_2(sl(3))$ acts on $V_{m_1,m_2}\otimes V_{m'_1,m'_2}$. 
Moreover, since it centralises the diagonal action of $U(sl(3))$, it leaves invariant the subspace $M_{m_1,m_2,m_1',m_2'}^{m''_1,m''_2}$, which thus becomes naturally a $Z_2(sl(3))$-module.

In this representation $M_{m_1,m_2,m_1',m_2'}^{m''_1,m''_2}$ of $Z_2(sl(3))$, the central parameters $k_1,k_2,k_3,l_1,l_2,l_3$ take definite complex values, since they are expressed in terms of the Casimir elements. Indeed recall that they were defined as:
\[\begin{array}{l}
k_1=C^{(2)}\otimes 1\,,\ \ k_2=1\otimes C^{(2)}\,,\ \ k_3=\delta(C^{(2)})\,,\\[0.5em]
\displaystyle l_1=(C^{(3)}+\frac{3}{2}C^{(2)})\otimes 1\,,\ \ l_2=1\otimes (C^{(3)}+\frac{3}{2} C^{(2)})\,,\ \ l_3=-\delta(C^{(3)}+\frac{3}{2}C^{(2)})\,.
\end{array}
\]
This motivates the following definition, which entails a specialisation of $\mathcal{A}$ and $Z_2(sl(3))$ acting on the space $M_{m_1,m_2,m_1',m_2'}^{m''_1,m''_2}$.
\begin{definition}\label{defAspec}
The algebra $\mathcal{A}^{spec}$ is the specialisation of $\mathcal{A}$ corresponding to the following values of the central parameters:
\[\begin{array}{c}
k_1=c^{(2)}(m_1,m_2)\,,\ \ k_2=c^{(2)}(m'_1,m'_2)\,,\ \ k_3=c^{(2)}(m''_1,m''_2)\,,\\[0.5em]
\displaystyle l_1=c^{(3)}(m_1,m_2)+\frac{3}{2}c^{(2)}(m_1,m_2)\,,\ \ l_2=c^{(3)}(m'_1,m'_2)+\frac{3}{2}c^{(2)}(m'_1,m'_2)\,,\\[0.8em]
\displaystyle l_3=-c^{(3)}(m''_1,m''_2)-\frac{3}{2}c^{(2)}(m''_1,m''_2)\,.
\end{array}
\]
Similarly, we denote by $Z_2(sl(3))^{spec}$ the same specialisation of $Z_2(sl(3))$.
\end{definition}
In simple terms, this specialisation means that in Definition \ref{def:cA2} of $\mathcal{A}$ and in the description of $Z_2(sl(3))$ in Theorem \ref{thm-PT}, 
the central elements $k_1,k_2,k_3,l_1,l_2,l_3$ are replaced by their expressions above in terms of $m_1,m_2,m'_1,m'_2,m''_1,m''_2$. 
Note that these specialisations are well-defined since both $\mathcal{A}$ and $Z_2(sl(3))$ are free modules over $\mathbb{C}[k_1,k_2,k_3,l_1,l_2,l_3]$.
In the algebra  $\mathcal{A}^{spec}$, the parameter $a_i$ becomes a polynomial in $m_1$, $m_2$, $m'_1$, $m'_2$, $m''_1$ and  $m''_2$ of degree $i$. 
Similarly $a_{12}$ becomes a polynomial of degree $12$ in  $Z_2(sl(3))^{spec}$.

We can summarise the results obtained so far in the following proposition:
\begin{prop} \label{pr:Aspec}
The algebra $Z_2(sl(3))^{spec}$ is generated by $X,Y,Z$ with the defining relations:
\begin{equation}
\begin{array}{l}
[X,Y]=Z\,,\\[0.5em]
[X,Z]= -6Y^2 + a_2 X^2+ a_5 X+a_8\,,\\[0.5em]
[Y,Z]=-2X^3-a_2\{X,Y \} -a_5 Y+a_6X+a_9\,,\\[0.5em]
x_1X+x_2Y+x_3X^2+x_4\{X,Y\} +x_5 Y^2+x_7 XYX-X^4+4Y^3+Z^2=a_{12}\,,
\end{array}
\end{equation}
where $x_i$ are given by \eqref{eq:xx} and $a_i$ are given in Definition \ref{def:cA2} and by \eqref{eq:a12} in which we replace $k_1,k_2,k_3,l_1,l_2,l_3$ 
 by their expressions in terms of $m_1,m_2,m'_1,m'_2,m''_1,m''_2$ given in Definition  \ref{defAspec}.
\end{prop}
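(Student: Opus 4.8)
The plan is to obtain the statement as a direct specialisation of Theorem \ref{thm-PT}, assembling facts already established rather than performing fresh computations. By that theorem, $Z_2(sl(3))$ is isomorphic to the quotient of $\mathcal{A}$ by the single relation $\Omega=a_{12}$, and under this isomorphism the generators $X,Y,Z$ of $Z_2(sl(3))$ correspond to the images of $A,B,C$ (this is Proposition \ref{prop-morA} together with the identification of the kernel). Thus, over the polynomial ring $P:=\mathbb{C}[k_1,k_2,k_3,l_1,l_2,l_3]$, the algebra $Z_2(sl(3))$ is presented by the three defining relations \eqref{relAlg2} of $\mathcal{A}$ supplemented by the relation $\Omega=a_{12}$, where $\Omega$ is given by \eqref{eq:CAP2}--\eqref{eq:xx} and $a_{12}$ by \eqref{eq:a12}. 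Note that after specialisation $X,Y,Z$ do generate, since $k_i,l_i$ become scalars and $Z=[X,Y]$.

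First I would set up the evaluation homomorphism $\vartheta:P\to\mathbb{C}$ sending each central parameter $k_i,l_i$ to the corresponding Casimir value listed in Definition \ref{defAspec}. By definition $Z_2(sl(3))^{spec}=Z_2(sl(3))\otimes_\vartheta\mathbb{C}$, and likewise $\mathcal{A}^{spec}=\mathcal{A}\otimes_\vartheta\mathbb{C}$. The key observation is that this base change is compatible with the presentation: since $\mathcal{A}$ is free over $P$ with the PBW basis $\{A^\alpha B^\beta C^\gamma\}$, its specialisation $\mathcal{A}^{spec}$ is presented by $A,B,C$ over $\mathbb{C}$ with the relations \eqref{relAlg2} in which every $a_i$ is replaced by $\vartheta(a_i)$. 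Renaming $A,B,C$ as $X,Y,Z$ yields exactly the first three relations of the statement, with $a_2,a_5,a_6,a_8,a_9$ now polynomials of respective degree $i$ in $m_1,m_2,m'_1,m'_2,m''_1,m''_2$.

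For the fourth relation I would argue that specialisation commutes with passing to the quotient, and this is the step to handle with care: a priori, specialising the central parameters could enlarge the defining ideal, so one must rule out the appearance of extra relations. The point is that $Z_2(sl(3))$ is itself free over $P$ --- indeed, the proof of Theorem \ref{thm-PT} exhibits the explicit basis of ordered monomials in $k_i,l_i,A,B,C$ with the power of $C$ restricted to $\{0,1\}$. Freeness makes the base change $\otimes_\vartheta\mathbb{C}$ exact, so $Z_2(sl(3))^{spec}$ inherits the specialised basis and consequently $Z_2(sl(3))^{spec}\cong\mathcal{A}^{spec}/(\Omega=a_{12})$ with $\Omega$ and $a_{12}$ evaluated under $\vartheta$. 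Concretely, I would let $Q$ be the algebra presented by the four stated relations; since these relations hold in $Z_2(sl(3))^{spec}$, there is a natural surjection $Q\twoheadrightarrow Z_2(sl(3))^{spec}$. Applying the diamond-lemma argument of Proposition \ref{prop-basisA'} to $Q$ --- the fourth relation expresses $C^2$ in terms of monomials of strictly lower degree --- shows that $Q$ has the same graded dimensions as the specialised basis above, forcing the surjection to be an isomorphism.

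It then remains only to read off the fourth relation explicitly: substituting $x_1,\dots,x_7$ from \eqref{eq:xx} into \eqref{eq:CAP2} produces the left-hand side $x_1X+x_2Y+x_3X^2+x_4\{X,Y\}+x_5Y^2+x_7XYX-X^4+4Y^3+Z^2$, while $\vartheta(a_{12})$ supplies the right-hand side via \eqref{eq:a12}, giving a polynomial of degree $12$ in the $m$'s. The main obstacle is thus not computational but conceptual, namely the flatness argument guaranteeing that the presentation survives specialisation intact; once this is secured, the proposition is simply a transcription of Theorem \ref{thm-PT} under $\vartheta$.
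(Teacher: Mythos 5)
Your proposal is correct and follows essentially the same route as the paper, which presents this proposition as an immediate summary of Theorem \ref{thm-PT} under the specialisation of Definition \ref{defAspec}, justified (as you do) by the freeness of $\mathcal{A}$ and $Z_2(sl(3))$ over $\mathbb{C}[k_1,k_2,k_3,l_1,l_2,l_3]$. Your explicit flatness/base-change argument merely spells out the detail the paper leaves implicit.
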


\begin{rem}
Consider the situation of Example \ref{exM} and denote by $\cX$ the endomorphism giving the action of the generator $X$ on the highest weight vectors in $M_{m_1,m_2,m_1',m_2'}^{m''_1,m''_2}$. The matrix $\cX$ has somehow been studied previously in \cite{PST} where a matrix $S$ related to $\cX$ through $S=-54\cX$ is introduced.
It has been shown in \cite{PST} that the eigenvalues of $S$ are non-degenerate and allow to distinguish the different highest-weight vectors in $M_{m_1,m_2,m_1',m_2'}^{m''_1,m''_2}$ thereby providing a solution to the missing label problem for the tensor 
product of two irreducible representations of $sl(3)$.
Let us also mention \cite{CaSt} where an operator $\Theta^{[0,3]}= -2\cX+l_1-l_2$ has been studied in the case where the 
Littlewood--Richardson coefficient $D_{m_1,m_2,m_1',m_2'}^{m''_1,m''_2}$ is 2.
\end{rem}

The explicit expressions of the polynomials $a_i$ in terms of $m_1,m_2,m'_1,m'_2,m''_1,m''_2$ are quite complicated but it turns out that these polynomials 
are related to the fundamental invariant polynomials of the Weyl group of type $E_6$. A first hint in this direction comes from observing that the degrees of the polynomials in Proposition \ref{pr:Aspec}
are $2,5,6,8,9$ and $12$ which are the exactly the degrees of the fundamental invariant polynomials of $E_6$.
We shall explain that in detail in the next subsection.


\subsection{The Weyl group of type $E_6$.}

Let us consider a root system of type $E_6$ and choose the simple roots $\alpha_1,\alpha_2,\alpha_3,\alpha_4,\alpha_5,\alpha_6$ with the numeration accordingly to the following Dynkin diagram:
\begin{center}
\begin{tikzpicture}[scale=0.2]
\draw (1,1) circle [radius=1];
\node [above] at (1,1.5) {$1$};
\draw (2,1)--(4,1);

\draw (5,1) circle [radius=1];
\node [above] at (5,1.5) {$2$};
\draw (6,1)--(8,1);

\draw (9,1) circle [radius=1];
\node [above] at (9,1.5) {$3$};
\draw (10,1)--(12,1);

\draw (13,1) circle [radius=1];
\node [above] at (13,1.5) {$4$};
\draw (14,1)--(16,1);

\draw (17,1) circle [radius=1];
\node [above] at (17,1.5) {$5$};

\draw (9,-3) circle [radius=1];
\node [right] at (9.5,-3) {$6$};
\draw (9,0)--(9,-2);

\end{tikzpicture}
\end{center}
We denote by $W(E_6)$ the corresponding Weyl group and by $s_i=s_{\alpha_i}$ the Weyl reflections associated to the roots $\alpha_i$ satisfying, for $1\leq i,j\leq 6$,
\begin{eqnarray}
 &&s_i^2=1\\
 &&s_is_j=s_js_i \qquad \text{if $i$ and $j$ are not connected in the Dynkin diagram}\\
 &&s_is_js_i=s_js_is_j  \qquad \text{if $i$ and $j$ are connected in the Dynkin diagram}.
\end{eqnarray}
Let us associate the parameters $m_1,m_2,m'_1,m'_2,m''_1$ and $m''_2$ with the simple roots as follows:
\begin{equation}\label{roots}
\begin{array}{l}
m_1=\alpha_1\,,\\[0.4em]
m_2=\alpha_2\,,
\end{array}\ \ \ \ \ \ \ \begin{array}{l}
m'_1=\alpha_5\,,\\[0.4em]
m'_2=\alpha_4\,,
\end{array}\ \ \ \ \ \ \ \
\begin{array}{l}
m''_1=\Theta\,,\\[0.4em]
m''_2=-\alpha_6\,,
\end{array}\end{equation}
where $\Theta$ is the longest positive root of $E_6$. The explicit expression of $\Theta$ is such that:
\[\alpha_3=\frac{1}{3}\bigl(m''_1+2m''_2-(m_1+2m_2)-(m'_1+2m'_2)\bigr)\ .\]
Note that $(m_1,m_2)$, $(m'_1,m'_2)$ and $(m''_1,m''_2)$ are three subsystems of type $A_2$ which are pairwise orthogonal. In other words, they generate a subsystem of type $A_2\times A_2\times A_2$.

The Weyl group acts on the set of roots by its usual reflection representation, and it is elementary to calculate the induced action of the generators of $W(E_6)$ on the parameters. We have:
\begin{equation}\label{eq:s1}
s_1\ :\left\{\begin{array}{l}
m_1\mapsto -m_1\,,\\[0.4em]
m_2\mapsto m_1+m_2\,,
\end{array}\right.\ \ \ 
s_2\ :\left\{\begin{array}{l}
m_1\mapsto m_1+m_2\,,\\[0.4em]
m_2\mapsto -m_2\,,
\end{array}\right.\end{equation}
\begin{equation}\label{eq:s2} 
s_5\ :\left\{\begin{array}{l}
m'_1\mapsto -m'_1\,,\\[0.4em]
m'_2\mapsto m'_1+m'_2\,,
\end{array}\right.
\ \ \ s_4\ :\left\{\begin{array}{l}
m'_1\mapsto m'_1+m'_2\,,\\[0.4em]
m'_2\mapsto -m'_2\,,
\end{array}\right.\end{equation}
\begin{equation}\label{eq:s3}
s_6\ :\left\{\begin{array}{l}
m''_1\mapsto m''_1+m''_2\,,\\[0.4em]
m''_2\mapsto -m''_2\,,
\end{array}\right.\ \ \ 
s_3\ :\left\{\begin{array}{l}
m_2\mapsto m_2+\alpha_3\,,\\[0.4em]
m'_2\mapsto m'_2+\alpha_3\,,\\[0.4em]
m''_2\mapsto m''_2-\alpha_3\,,
\end{array}\right.\end{equation}
where the actions that are not given are trivial and the explicit expression of $\alpha_3$ is provided above.

\vskip .2cm
It is elementary that linear transformations on roots can alternatively be seen as linear transformations of coordinates in the dual space, which is here the space of weights since $E_6$ is simply-laced. To be explicit, consider the weight lattice of type $E_6$, that is:
\[P_{E_6}=\mathbb{Z}\omega_1\oplus \mathbb{Z}\omega_2 \oplus \mathbb{Z}\omega_3 \oplus \mathbb{Z}\omega_4\oplus \mathbb{Z}\omega_5 \oplus \mathbb{Z}\omega_6\,,\]
where $\omega_1,\omega_2,\omega_3,\omega_4,\omega_5,\omega_6$ are the fundamental weights corresponding to the simple roots. 
It comes naturally with an action of the Weyl group $W(E_6)$, which reads explicitly as:
\[s_1(\omega_1)=-\omega_1+\omega_2\,,\ \ \ s_2(\omega_2)=-\omega_2+\omega_1+\omega_3\,,\ \ \ s_3(\omega_3)=-\omega_3+\omega_2+\omega_4+\omega_6\,,\]
\[s_4(\omega_4)=-\omega_4+\omega_3+\omega_5\,,\ \ \ \ s_5(\omega_5)=-\omega_5+\omega_4\,,\ \ \ \ s_6(\omega_6)=-\omega_6+\omega_3\ ,\]
and the unspecified actions are trivial since $s_i(\omega_j)=\omega_j$ if $i\neq j$.

Now, we parametrise a vector of the space spanned by $\omega_1,\omega_2,\omega_3,\omega_4,\omega_5,\omega_6$ as follows:
\[\omega=m_1\omega_1+m_2\omega_2+\frac{1}{3}\bigl(m''_1+2m''_2-(m_1+2m_2)-(m'_1+2m'_2)\bigr)\omega_3+m'_2\omega_4+m'_1\omega_5-m''_2\omega_6\ .\]
In other words, we define $(m_1,m_2,m'_1,m'_2,m''_1,m''_2)$ as the coordinates in the new basis:
\[(\omega_1-\frac{1}{3}\omega_3,\ \omega_2-\frac{2}{3}\omega_3,\ \omega_5-\frac{1}{3}\omega_3,\ \omega_4-\frac{2}{3}\omega_3,\ \frac{1}{3}\omega_3,\ -\omega_6+\frac{2}{3}\omega_3)\ .\] 
Then, it is immediate that the action of the Weyl group on weights transforms the parameters $m_1,m_2,m'_1,m'_2,m''_1,m''_2$ as per the formulas (\ref{eq:s1})--(\ref{eq:s3}).

\subsection{Fundamental invariants and $E_6$ symmetry of $Z_2(sl(3))$.}

Consider now the polynomial algebra $S(V)$ associated to the reflection representation of $W(E_6)$. The roots associated to the parameters $m_1,m_2,m'_1,m'_2,m''_1$ and $m''_2$ in (\ref{roots}) form a basis of the space of the reflection representation, and thus $S(V)$ can be identified with the algebra of polynomials in $m_1,m_2,m'_1,m'_2,m''_1,m''_2$.

The Weyl group $W(E_6)$ acts on $S(V)$ and the subalgebra of invariant polynomials for $W(E_6)$ is generated by 6 algebraically independent polynomials, which can be chosen homogeneous of degrees, respectively, 2,5,6,8,9,12 (the fundamental degrees of $E_6$).

The order of $W(E_6)$ is $51840$ and we can define an averaging operator over this group
\begin{equation}
 \langle. \rangle=\frac{1}{51840} \sum_{s \in W(E_6)} s \,.
\end{equation}
Then a choice of six fundamental homogeneous invariant polynomials for $W(E_6)$ is given by
\begin{eqnarray}
&& p_2=\frac{3}{2} \langle m_1^2 \rangle\ ,\quad p_5=\frac{8}{3}\langle (m'_1)^2m'_2m''_1m''_2 \rangle\ ,\quad p_6=10\langle m_1m_2m'_1m'_2m''_1m''_2\rangle\,,\\
&&  p_8=\frac{5}{3} \langle (m_1)^2m_2(m'_1)^2m'_2m''_1m''_2 \rangle \ ,\quad p_9=\frac{40}{27}\langle (m_1m_2m'_1)^2m'_2m''_1m''_2 \rangle\,,\nonumber\\
&& p_{12}=\frac{20}{3}\langle( m_1m_2m'_1m'_2m''_1m''_2)^2\rangle\,. \nonumber  
\end{eqnarray}

We can now state the following theorem about  $Z_2(sl(3))^{spec}$:
\begin{thm}\label{thm-sym}
 The algebra $Z_2(sl(3))^{spec}$ is invariant under the action of the Weyl group of type $E_6$ given by \eqref{eq:s1}-\eqref{eq:s3}.
 The polynomials $a_2$, $a_5$, $a_6$, $a_8$, $a_9$ and $a_ {12}$ can be expressed in terms of the fundamental invariant polynomials as follows
 \begin{eqnarray}
  && a_2=p_2 -3 \ , \quad a_5 =-p_5 \ , \quad a_6=p_6+  \frac{p_2^3}{9}+\frac{2p_2^2}{3}-\frac{3p_2}{2}+1\,,\\
  && a_8=-p_8 +\frac{p_2^4}{54} +\frac{p_2p_6}{12}  +\frac{p_2^3}{18}+\frac{p_6}{2}+\frac{p_2^2}{6}-\frac{p_2}{4}+\frac{1}{8}\,,\\
  &&a_9 = -p_9-p_5\left(\frac{p_2^2}{27}+\frac{p_2}{3}- \frac{1}{4}\right)\,,\\
  &&a_{12}= -p_{12}+\frac{35p_6^2}{12} +\frac{p_2^6}{36}+\frac{17p_2^3p_6}{72} -\frac{p_2^2p_8 }{18} -\frac{7p_2p_5^2}{18}
  +\frac{p_2^5}{162}-\frac{p_2p_8}{3}+\frac{p_2^2p_6}{36}-\frac{p_5^2}{4}\\
  &&\hspace{1cm}-\frac{13p_2^4}{108}+\frac{13p_8}{2}-\frac{13p_2p_6}{24}-\frac{19p_2^3}{54}-3p_6-\frac{11p_2^2}{12}+\frac{11p_2}{8}-\frac{11}{16}\,.\nonumber
 \end{eqnarray}
\end{thm}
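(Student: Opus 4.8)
The plan is to prove Theorem \ref{thm-sym} in two logically separate parts: first establish that $Z_2(sl(3))^{spec}$ is invariant under the $W(E_6)$-action, and then verify the explicit expressions for $a_2,a_5,a_6,a_8,a_9,a_{12}$ in terms of the fundamental invariants $p_2,p_5,p_6,p_8,p_9,p_{12}$. The key observation is that, by Proposition \ref{pr:Aspec}, the algebra $Z_2(sl(3))^{spec}$ is entirely determined by the six coefficient polynomials (together with the fixed structural constants and the universal relations coming from $\mathcal{A}$). Hence the abstract invariance statement is \emph{equivalent} to showing that each of these six polynomials is invariant under the reflection representation \eqref{eq:s1}--\eqref{eq:s3}. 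Since the fundamental invariants generate the full ring of $W(E_6)$-invariants, the second part of the theorem immediately implies the first: if every $a_i$ is a polynomial in the $p_j$, then every $a_i$ is manifestly $W(E_6)$-invariant. So I would make the logical backbone be: prove the explicit formulas, and deduce invariance as a corollary.

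First I would set up the identification carefully. Under \eqref{roots} the six parameters $m_1,m_2,m'_1,m'_2,m''_1,m''_2$ are a basis of the reflection representation $V$, so $S(V)\cong\mathbb{C}[m_1,m_2,m'_1,m'_2,m''_1,m''_2]$ and the $a_i$ specialised as in Definition \ref{defAspec} are honest elements of $S(V)$. I would record that each $a_i$ (and $a_{12}$) is homogeneous of degree $i$ once one assigns degree $1$ to every $m$-parameter; this follows from the degree assignments already noted after Proposition \ref{pr:Aspec} and from the fact that $c^{(2)}$ is quadratic and $c^{(3)}$ cubic in the $m$'s. Next I would verify directly that the proposed generators $p_2,\dots,p_{12}$ are genuinely $W(E_6)$-invariant and algebraically independent: invariance is automatic from the averaging operator $\langle\,\cdot\,\rangle$, and nonvanishing (hence, by the degree count $2,5,6,8,9,12$ matching the fundamental degrees, algebraic independence and the fact that they generate the invariant ring) can be checked by exhibiting a single monomial with nonzero coefficient or by evaluating at a generic point.

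The heart of the proof is then a finite computation in the polynomial ring $S(V)$. I would expand each specialised $a_i$ explicitly as a polynomial in $m_1,m_2,m'_1,m'_2,m''_1,m''_2$ by substituting the formulas for $c^{(2)},c^{(3)}$ into Definition \ref{def:cA2} and into \eqref{eq:a12}, and similarly expand the proposed right-hand sides as polynomials in the same variables via the explicit averages defining $p_2,\dots,p_{12}$. Comparing coefficients then reduces each claimed identity to a \emph{finite} equality of polynomials, which can be certified termwise (in practice with computer algebra, exactly as the authors certify the formula \eqref{eq:a12} itself). Because both sides are homogeneous of the same degree, only finitely many monomials occur and the verification is entirely mechanical. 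I would present this as: ``Each of the six identities is an equality of homogeneous polynomials of degree $i$ in the six variables, verified by direct expansion and comparison of coefficients.'' The invariance of $Z_2(sl(3))^{spec}$ then follows because the defining data in Proposition \ref{pr:Aspec}, namely the $a_i$ and the $x_j$ (the latter being polynomials in $a_2,a_5,a_6,a_8,a_9$ by \eqref{eq:xx}), are all $W(E_6)$-invariant, so the full system of relations is preserved under the action on parameters.

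The main obstacle will be purely computational rather than conceptual: the polynomial $a_{12}$ in \eqref{eq:a12}, once fully expanded in the six $m$-variables, is a large degree-$12$ object, and the identity for $a_{12}$ in the theorem involves a delicate combination of $p_{12},p_6^2,p_2^6,p_2^3p_6,p_2^2p_8,p_2p_5^2$ and many lower-order terms whose rational coefficients must match exactly. Getting the normalisations of $p_5,p_6,p_8,p_9,p_{12}$ consistent with the chosen generators of $Z_2(sl(3))$ is where sign and scaling errors are most likely to creep in, so I would first fix the normalising constants (the factors $\tfrac{3}{2},\tfrac{8}{3},10,\tfrac{5}{3},\tfrac{40}{27},\tfrac{20}{3}$) by matching the leading terms of $a_2,\dots,a_9$ against $p_2,\dots,p_9$, and only then tackle $a_{12}$, using the already-established lower identities to reduce the number of independent checks. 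A secondary subtlety worth flagging explicitly is that invariance of $a_{12}$ (and hence of the last relation in Proposition \ref{pr:Aspec}) is not obvious a priori from the realisation, since $a_{12}=\phi(\Omega)$ arises from the Casimir element; the cleanest justification is precisely the expression in terms of invariants, which is why I would route the entire invariance claim through the explicit formulas rather than attempt a direct symmetry argument on the centraliser.
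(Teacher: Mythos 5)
Your proposal is correct and follows essentially the same route as the paper: the authors likewise establish the six explicit identities $a_i=a_i(p_2,\dots,p_{12})$ by direct (computer-assisted) polynomial expansion in the parameters $m_1,m_2,m'_1,m'_2,m''_1,m''_2$, and then note that the invariance of $Z_2(sl(3))^{spec}$ follows immediately since the defining data of the algebra are thereby expressed in $W(E_6)$-invariants. Your additional remarks on homogeneity, on fixing the normalisations of the $p_j$, and on why the invariance of $a_{12}$ should be routed through the explicit formula are sensible elaborations of the same argument rather than a different proof.
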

\proof By direct (but long) computations, one can show that the polynomials $a_i$ ($i=2,5,6,8,9,12$) are given in terms of the invariant polynomials as stated in the theorem.
The invariance of $Z_2(sl(3))^{spec}$ follows. \endproof

\begin{rem} 
There exists an additional automorphism $r$ of $Z_2(sl(3))^{spec}$ with $r:X \to -X$, $r:Y \to Y$ and 
\begin{eqnarray}
r&:& m_1 \mapsto -m_1\,, \quad m_2 \mapsto -m_2\,, \quad m'_1 \mapsto -m'_1\,, \quad m'_2 \mapsto -m'_2\,, \quad m''_1 \mapsto -m''_1\,, \quad m''_2 \mapsto -m''_2\,.
  \nonumber 
\end{eqnarray}
This transformation $r$ on the parameters and all the $s_i$ generate a group of order 103680 which is the complete symmetry of the root system of $E_6$: the group generated by the Weyl 
reflections and the central symmetry.
\end{rem}

\begin{rem} 
Recall that we have described a group of automorphisms $\textbf{Aut}_0$ of $Z_2(sl(3))$ (before specialisation) of order 12. 
Of course, all these automorphisms can be pushed to the specialisation $Z_2(sl(3))^{spec}$. Their action on the parameters is as follows: we have all six permutations of the three pairs $(m_1,m_2)$, $(m'_1,m'_2)$ 
and $(m''_1,m''_2)$, together with their compositions with the map $\tau$ transposing $m_1\leftrightarrow m_2$, $m'_1\leftrightarrow m'_2$ and $m''_1\leftrightarrow m''_2$. 
Among them, the following six automorphisms leave $X$ and $Y$ invariant:
\[\textbf{Aut}_0^+=\{Id,\ (1,2,3),\ (1,3,2),\ (1,2)\tau,\ (1,3)\tau,\ (2,3)\tau \}\ .\]
They are included in the symmetry described in Theorem \ref{thm-sym} and corresponds to the action of some elements of the Weyl group $W(E_6)$.

So, modulo the symmetry under $W(E_6)$ which leaves $X$ and $Y$ invariant, the automorphisms in $\textbf{Aut}_0$ provide one more non-trivial automorphism 
(sending $X$ to $-X$ and $Y$ to $Y$). Modulo $W(E_6)$, this additional symmetry is exactly the central symmetry $r$ of the preceding remark. 
For example, it is easy to check that the composition $r\circ\tau$ corresponds to the element of $W(E_6)$ which is the longest element of the subgroup of type $A_2\times A_2\times A_2$.
\end{rem}

\subsection{A link with symplectic reflection algebra}
Reproducing the same reasoning as in Remark \ref{rem-gen}, we find that the specialised centraliser $Z_2(sl(3))^{spec}$ is isomorphic to a certain quantum Hamiltonian reduction algebra:
\begin{equation}\label{qHred}
\Bigl(\bigl(U_{(m_1,m_2)}\otimes U_{(m'_1,m'_2)}\otimes U_{(m''_1,m''_2)}\bigr)/I\Bigr)^{sl_3}\cong Z_2(sl(3))^{spec}\ ,
\end{equation}
where each $U_{(m,n)}$ is the quotient of $U(sl(3))$ by the annihilator of the Verma module with highest weight $(m,n)$, and $I$ is the left ideal defined similarly as in Remark \ref{rem-gen}. Indeed, it is a standard fact that the quotient $U_{(m,n)}$ is obtained by fixing the values of the Casimir elements of $U(sl(3))$ according to the weight $(m,n)$. So the specialisation of the three pairs of Casimir elements in the left hand side of (\ref{qHred}) exactly corresponds to the specialisation in Definition \ref{defAspec}.

A particular case of the results of \cite{ELOR} asserts that the Hamiltonian reduction above is isomorphic to the spherical subalgebra $eH(\Gamma)e$ of a certain symplectic reflection $H(\Gamma)$ algebra of rank 1. The symplectic reflection algebra $H(\Gamma)$ of rank 1 corresponds to a finite subgroup $\Gamma$ of $SL_2(\mathbb{C})$ and is a certain quotient of the smashed product $T(\mathbb{C}^2)\rtimes \mathbb{C}[\Gamma]$. The idempotent $e$ is the idempotent $\frac{1}{|\Gamma|}\sum_{g\in\Gamma}g$ in $\mathbb{C}[\Gamma]$. In our situation, the group $\Gamma$ is the one associated through the McKay correspondence to the Dynkin diagram of type $E_6$. This is where the type $E_6$ appears. We note that the symplectic reflection algebra $H(\Gamma)$ contains $6$ parameters, one for each non-trivial conjugacy class in $\Gamma$.  The correspondence between these 6 parameters and the 3 $sl(3)$-weights is quite intricated and is described in \cite{ELOR}.

Thus the algebraic description of $Z_2(sl(3))^{spec}$ obtained in Proposition \ref{pr:Aspec} and  Theorem \ref{thm-sym} can be seen as an explicit presentation of the spherical subalgebra $eH(\Gamma)e$. It seems not obvious to obtain this presentation directly from the definition of $eH(\Gamma)e$.

\section{A connection between $\alg$ and the Racah or Hahn algebras \label{sec:RH}}

As mentioned in Section \ref{sec-A}, special cases of the algebra $\alg$ cover the known Racah and Hahn algebras. 
These special cases correspond to taking some of the central parameters to be 0. 
In this section, we offer a somewhat different connection between the algebra $\alg$ and the Racah or Hahn algebras. 
Namely we explain that the defining relations of $\alg$ are satisfied by some general operators of Heun type in the Racah algebra and also in the Hahn algebra. 
This suggests some connections of the algebra $\alg$ with the Heun--Racah \cite{BCTVZ} and the Heun--Hahn algebras \cite{VZ}.

Due to several free parameters in the Heun operators, many possible values of the central parameter of $\alg$ can be realised this way. 
Nevertheless we do not investigate here the possibility of realising $\alg$ with arbitrary central parameters in the Racah or Hahn algebras.

\subsection{Racah algebra. \label{sec:racah}}

The Racah algebra $\cR$ is the algebra generated by the following elements \cite{GVZ}:
\[ R_1\ ,R_2,\ R_3\ \ \text{and}\ \  d,\ e_1,\ e_2, \ \Gamma\,,\]
with the defining relations:
\begin{equation}\label{relRAcah}
\begin{array}{l}
d,\ e_1,\ e_2, \Gamma\ \text{are central\,,}\\[0.5em]
[R_1,R_2]=R_3\,,\\[0.5em]
[R_3,R_1]= R_1^2+\{R_1,R_2\}+ d R_1 + e_2\,,\\[0.5em]
[R_2,R_3]= R_2^2+\{R_1,R_2\}+ d R_2 +e_1\,,\\[0.5em]
\Gamma=\{R_1^2,R_2\}+\{R_1,R_2^2\} + R_1^2+R_2^2+R_3^2+(d+1)\{R_1,R_2\}+(2e_1+d)R_1+(2e_2+d)R_2 \  .
\end{array}
\end{equation}
The following polynomials in terms of the generators of the Racah algebra $\cR$:
\begin{equation}\label{eq-morR}
A= z_0+z_1 R_1 +z_2 R_2 +2 R_3\,,\qquad B= z_3+\frac{z_1(z_1+z_4)}{2} R_1 +\frac{z_2(z_2+z_4)}{2} R_2 +z_4 R_3 +2 \{R_1,R_2\} \,,
\end{equation}
satisfy relations \eqref{relAlg} of the algebra $\alg$ with the parameters $a_0=-6$ and $a'_0=-2$ and the other parameters $a_1, a_2, a_3, a_4, a_5,a_6, a_8$ and $a_9$
functions of $z_0,z_1,z_2,z_3,z_4,\delta,\epsilon_1,\epsilon_2$ and $\Gamma$. These expressions are obtained by a straightforward computation. We provide their explicit expressions in Appendix \ref{App:A}.

The generators $A$ and $B$ given by \eqref{eq-morR} are the Heun--Racah operators.
The so-called Heun--Racah algebra is realised by taking $A$ or $B$ as one generator and for the other, one of the generators of the Racah algebra $R_1$ or $R_2$. The above result 
means that the algebra $\alg$ can be obtained from two different Heun--Racah operators.

\subsection{Hahn algebra. \label{sec:hahn}}

The Hahn algebra $\cH$ \cite{Zh,VZ} is the algebra generated by the following elements:
\[ H_1\ ,H_2,\ H_3\ \ \text{and}\ \  \delta,\ \epsilon_1,\ \epsilon_2, \ \Lambda\,,\]
with the defining relations:
\begin{eqnarray}
&& \delta_1, \delta_2,\ \epsilon_1,\ \epsilon_2,\ \Lambda\ \text{are central\,,}\\[0.5em]
&&H_3=[H_1,H_2]\ , \\
&&[H_2,H_3]= -2 \{H_1,H_2\} +\delta_2 H_2 +\delta_1 H_1 +\epsilon_1 \ , \\
&&[H_1,H_3]= 2 H_1^2 -\delta_2 H_1 + H_2 +\epsilon_2 \ , \\
&&H_3^2 =\Lambda+2\{H_1^2,H_2\}-\delta_2 \{H_1,H_2\}+2\epsilon_2 H_2-(4+\delta_1)H_1^2+ H_2^2-2(\epsilon_1-\delta_2)H_1 \ .
\end{eqnarray}

Let us define the following Heun--Hahn operator by
\begin{equation}
 A= z_0+ z_1 H_1 +z_2 H_2 +z_3 H_3 +z_4 \{H_1,H_2\}\ . \label{eq:Xe}
\end{equation}
We define also the operator $B$ as follows
\begin{eqnarray}
 B=z_5+z_6 H_1+z_7 H_2+z_8 H_3+z_9 \{H_1,H_2\} +z_{10} H_1^2+z_{12} H_1^2 H_2+z_{13} H_1 H_2H_1 \,, \label{eq:Ye}
\end{eqnarray}
where 
\begin{eqnarray}
 z_1&=&\mp \frac{\delta_1}{4}\mp (z_{13}+1)(z_{13}-2z_{3}+1)\ , \quad z_4=\pm \frac{1}{2}\,,\\
 z_6&=& \frac{\epsilon_1}{2} +\left( \frac{\delta_2}{4}\pm z_2\right)\frac{\delta_1}{3}+\frac{\delta_2}{6}(z_{13}+1)(2z_{13} \pm 2z_3 -1)-\frac{2z_2}{3}(z_{13}+1)(z_{13}\mp2z_3+1)  \,,\\
z_7&=&\frac{1}{4}(2z_3\mp 1)(2z_3 \mp 2z_{13}\mp 1)  -\frac{1}{3} z_2^2  \pm \frac{\delta_2z_2}{6}\,,\quad z_8=\frac{z_2}{3}(2z_3\mp 3z_{13}\mp 3)   \pm \frac{\delta_2z_3}{6} \,,\\
z_9&=&\frac{\delta_2}{12}\mp \frac{2z_2}{3}\,, \quad z_{10}= \frac{\delta_1}{4}-z_{13}(z_{13}+1)\,,\quad z_{12}=-1-z_{13}\,.
\end{eqnarray}
The elements $A$ and $B$ satisfy relations \eqref{relAlg} of the algebra $\alg$ with the parameters $a_0=-6$, $a'_0=-2$, $a_1=0$ and the other parameters $ a_2, a_3, a_4, a_5,a_6, a_8$ and $a_9$
functions of $z_0,z_2,z_3,z_5,\delta_1,\delta_2,\epsilon_1,\epsilon_2$ and $\Lambda$. In Subsection \ref{sec:racah}, these functions are obtained by a straightforward but lengthy computation. 
In this case they are quite complicated and are not provided.

The element $A$ given by \eqref{eq:Xe} is the Heun--Hahn operator whereas the element $B$ is a generalisation (we recover the Heun--Hahn operator for $z_{10}=z_{12}=z_{13}=0$).
The so-called Heun--Hahn algebra is realised by $A$ and one of the generators of the Hahn algebra that is $H_1$ or $H_2$. 

\section{Conclusion}

This study has unraveled elegant algebraic structures associated to the Clebsch--Gordan series (or the Littlewood--Richardson coefficients) of $sl(3)$ which suggest various fascinating perspectives. 
Let us mention a few to conclude. It should be possible to throw more light on the solution of the corresponding ``missing label problem'' by exploiting the connection with quadratic algebras and their representations. The Bethe ansatz could be brought to bear on the diagonalisation of
the generators of $Z_2(sl(3))$, the labelling operators, viewed as Hamiltonians. This will be the subject of a forthcoming publication.

Besides, the results presented here provide the groundwork to 
identify the full algebra associated to the Clebsch--Gordan problem for $sl(3)$ which should have for generators the complete set of
operators labelling both the direct product states and the recoupled ones. The determination of this algebra would provide for $sl(3)$ the analogue
of the Hahn algebra attached to the Clebsch--Gordan problem for $sl(2)$.

The Calabi--Yau properties of the algebra $\mathcal{A}^{gen}$ and their applications deserve further scrutiny. Also, the symmetry of the specialised diagonal
centraliser of $sl(3)$, $Z_2(sl(3))$, under the action of the Weyl group of $E_6$  lifts the veil over striking facts that stand to significantly inform the
representation theory of potential algebras such as those of Racah and Askey--Wilson. It should be recalled that the particular case of $\mathcal{A}^{gen}$ whose quotient is $Z_2(sl(3))$, had also been found in connection with the labelling of representation basis vectors associated to the chain $su(3) \supset o(3) \supset o(2)$. This is awaiting an analysis similar to the one carried here. Many other generalisations also come to mind. On the one hand there are
additional well known missing label problems such as the ones corresponding to the subalgebra chains: $su(4) \supset su(2)\oplus su(2)$, $o(5) \supset su(2) \oplus u(1), o(5) \supset o(3), g_2 \supset o(3)$ etc. What is the structure of the corresponding centraliser? What symmetries will operate?
On the other hand, the same questions can be asked when considering diagonal embeddings in multifold tensor products. We plan on examining these
issues in the future.

\appendix

\section{Parameters of $\alg$ for the realisation in terms of Heun--Racah operators \label{App:A}}

The generators $A$ and $B$ given by \eqref{eq-morR} satisfy relations \eqref{relAlg} with the parameters given by
\[\begin{array}{ll}
a_0= & -6\,,\\[0.5em]
a_4= & 4d^2+2(z_1z_2-2)d-16(e_1-e_2)-4z_0(z_1+z_2)-6z_0z_4+12z_3\,,\\[0.5em]
a_2= & -\frac{1}{2}(z_1^2+z_2^2+3z_4^2+3z_1z_2)-2z_4(z_1+z_2)+2d-2  \,,\\[0.5em]
a_5= &-2(2z_1+2z_2+3z_4)z_3-4dz_0+(z_1^2+3z_1z_2+4z_1z_4+z_2^2+4z_2z_4+3z_4^2+4)z_0\\
 & +(2z_4-\displaystyle \frac{1}{2}z_1z_2(z_1+z_2)-z_1z_2z_4)d+8(z_2+z_4)e_1+8(z_1+z_4)e_2-2d^2z_4\,,\\[0.5em]
c_1= &2(2z_1+2z_2+3z_4)z_0z_3-8(z_2+z_4)e_1z_0-8(z_1+z_4)e_2z_0+16(e_1+e_2)z_3-32e_1e_2-6z_3^2 \\
& +(2d-2-\frac{1}{2}(z_1^2 +z_2^2 +3z_1z_2+3z_4^2  )-2(z_1+z_2)z_4)z_0^2+\frac{1}{2}dz_0z_1z_2(z_1+z_2) \\
& +d(z_0z_4-2z_3)(z_1z_2+2d-2)+\frac{1}{2}(z_2^2-4)(z_1z_2-z_1^2+4d)e_1+\frac{1}{2}(z_1^2-4)(z_1z_2-z_2^2+4d)e_2\\
& +2\Gamma(z_1^2-z_1z_2+z_2^2-4d)\ ,\\[0.5em]
 a_1 = & 2z_1+2z_2+3z_4\ , \\[0.5em]
a'_0= & -2\,,\\[0.5em]
a_3= &-\frac{1}{4}(z_1+z_2)(3z_1z_2+8)-\frac{3}{4}z_4^3+3dz_4+6z_0-3z_4-\frac{3}{4}(z_1^2-3z_1z_2-z_2^2)z_4-\frac{3}{2}(z_1+z_2)z_4^2  \,,\\[0.5em]
a_6= &(z_4^2-\frac{1}{2}z_1z_2(z_4^2+z_1z_2+ z_2  z_4+  z_1   z_4)-2  z_1  z_2) d-( z_1^2+3  z_1  z_2+ z_2^2+4  (z_1+z_2)  z_4+3  z_4^2+4)  z_3\\
&+2(  z_2^2+4  z_2  z_4+2  z_4^2+4) e_1+\frac{3}{2} ( (z_1^2 + z_2^2+3  z_1  z_2+ z_4^2 )  z_4 +  z_1^2  z_2+  z_1  z_2^2 )  z_0\\
&+2(  z_1^2+4  z_1  z_4+2  z_4^2+4) e_2-6  z_0^2-d^2  z_4^2-6 d  z_0  z_4+4 d  z_3\\
&+( (3  z_4^2+4)(  z_1+ z_2)+6  z_4)  z_0+8 \Gamma \,,\\[0.5em]
a_9= & d^2  z_0  z_4^2-4 d  z_0  z_3-\frac{1}{4}  z_1 ( z_2^2-4)( z_2+ z_4) ( z_1- z_2) e_1+\frac{1}{4}  z_2 ( z_1^2-4)  ( z_1+ z_4) ( z_1- z_2) e_2-2 d^2  z_3  z_4\\
&+ z_4 ( z_2^2-4)  d e_1+ z_4 ( z_1^2-4) d e_2+8( z_2+ z_4) e_1  z_3+8(  z_1+  z_4) e_2  z_3-16 e_1 e_2  z_4\\
& -(\frac{3}{4}(z_4^3+3  z_1  z_2  z_4 +  z_1^2  z_2+  z_1^2  z_4+  z_1  z_2^2+ z_2^2  z_4 )+3  z_4+\frac{3}{2}  z_4^2 ( z_1+ z_2)+2(  z_1+ z_2) )  z_0^2\\
&-(2  z_1+2  z_2+3  z_4)  z_3^2+2  z_0^3+3 d  z_0^2  z_4+( \frac{1}{2} z_1  z_2 ( z_1  z_4  +   z_2  z_4 +   z_4^2+ z_1  z_2 )+2  z_1  z_2- z_4^2) d  z_0\\
&-2(  z_2^2+4 z_2  z_4+2  z_4^2+4) e_1  z_0-2( z_1^2+4  z_1  z_4+2 z_4^2+4) e_2  z_0-z_1  z_2(  z_4+\frac{1}{2}(z_1+z_2)) d  z_3\\
&+( z_1^2+3  z_1  z_2+4  z_1  z_4+ z_2^2+4  z_2  z_4+3  z_4^2+4)  z_0  z_3+2dz_3z_4\\
&+\Gamma(z_1z_2(z_1+z_2)  -8z_0+(z_1^2-z_1z_2+z_2^2-4d)z_4)\ .
\end{array}\]

\vskip .5cm
\textbf{Acknowledgements.} The authors warmly thank Rupert Yu and Alexei Zhedanov for stimulating discussions and are very grateful to the anonymous referee for valuable comments. N.Cramp\'e and L.Poulain d'Andecy are partially supported by Agence National de la Recherche Projet AHA ANR-18-CE40-0001.
 L.Poulain d'Andecy is grateful to the Centre de Recherches Math\'ematiques (CRM) for 
 hospitality and support during his visit to Montreal in the course of this investigation.
 The research of L.Vinet is supported in part by a Discovery Grant from the Natural Science and Engineering Research Council (NSERC) of Canada.

\end{document}